\def\@settitle{%
  \baselineskip14\p@\relax
    {\Large\bfseries
  \@title}}
\def\@setauthors{%
  \begingroup
  \def\thanks{\protect\thanks@warning}%
  \trivlist
  \footnotesize \@topsep45\p@\relax
  \advance\@topsep by -\baselineskip
  \item\relax
  \author@andify\authors
  \def\\{\protect\linebreak}%
  {\sc\fontsize{12}{10}\selectfont\authors}%
  \ifx\@empty\contribs
  \else
    ,\penalty-3 \space \@setcontribs
    \@closetoccontribs
  \fi
  \endtrivlist
  \endgroup
}
\def\@secnumfont{\bfseries}%
\def\section{\@startsection{section}{1}%
  \z@{.7\linespacing\@plus\linespacing}{.5\linespacing}%
  {\normalfont\bf}}
\renewcommand{\BibLabel}{%
    \Hy@raisedlink{\hyper@anchorstart{cite.\CurrentBib}\hyper@anchorend}%
    [\thebib]\hfill%
}
\newcommand{\arxiv}[1]{\tt arxiv:\hspace{0pt}{\href{http://arxiv.org/abs/#1}{#1}}}
\newcommand{\cxymatrix}[1]{\vcenter{\xymatrix@=15pt{#1}}}
\newcommand{\xysubseteq}{\ar@{}[r]|{\displaystyle\subseteq}}
\newcommand{\xysubseteqdown}{\ar@{}[d]|{\rotatebox{90}{$\supseteq$}}}
\newtheorem{theorem}{Theorem}[section]
\newaliascnt{lemma}{theorem}
\newtheorem{lemma}[lemma]{Lemma}
\newaliascnt{corollary}{theorem}
\newaliascnt{proposition}{theorem}
\theoremstyle{definition}
\newaliascnt{definition}{theorem}
\newtheorem{definition}[definition]{Definition}
\newaliascnt{remark}{theorem}
\newtheorem{remark}[remark]{Remark}
\newtheorem{remarks}[remark]{Remarks}
\newtheorem*{remark*}{Remark}
\newaliascnt{example}{theorem}
\setlist[enumerate,2]{label=\textit{\alph*)},ref=\textit{\alph*})}
\setlist[enumerate,1]{label=\textit{\roman*)},ref=\textit{\roman*})}
\newcolumntype{H}{>{\rule{0pt}{18pt}$}l<{$}}
\newcolumntype{L}{>{$}l<{$}}
\newcolumntype{R}{>{\collectcell\boldsymbol}r<{\endcollectcell}}
\def\ph{\relax}
\newcommand{\I}[1]{\IndexOben{\siv_#1}}
\newcommand{\Ig}[1]{\IndexOben{\gv_#1}}
\def\rlap#1{\hbox to 0pt{$\scriptstyle#1$\hss}}
\def\wss.{weak spherical datum}
\def\wsss.{weak spherical data}
\newcommand{\fg}{\mathfrak{g}}
\newcommand{\fl}{\mathfrak{l}}
\newcommand{\fs}{\mathfrak{s}}
\newcommand{\ft}{\mathfrak{t}}
\renewcommand{\sl}{\fs\fl}
\newcommand{\Xq}{{\overline X}}
\newcommand{\Yq}{{\overline Y}}
\newcommand{\cN}{{\mathcal N}}
\newcommand{\CC}{\mathbb{C}}
\newcommand{\QQ}{\mathbb{Q}}
\newcommand{\ZZ}{\mathbb{Z}}
\renewcommand{\rho}{\varrho}
\renewcommand{\phi}{\varphi}
\renewcommand{\epsilon}{\varepsilon}
\newcommand{\leer}{\varnothing}
\newcommand{\<}{\langle} 
\renewcommand{\>}{\rangle}
\newcommand{\into}{\hookrightarrow}
\newcommand{\auf}{\twoheadrightarrow}
\renewcommand{\[}{\begin{equation}}
\renewcommand{\]}{\end{equation}}
\DeclareMathOperator{\rk}{rk}
\DeclareMathOperator{\Hom}{Hom}
\DeclareMathOperator{\Ad}{Ad}
\DeclareMathOperator{\res}{res}
\newcommand{\G}{{\mathbf{G}}}
\newcommand{\cD}{\mathcal{D}}
\newcommand{\cF}{\mathcal{F}}
\newcommand{\cQ}{\mathcal{Q}}
\newcommand{\cZ}{\mathcal{Z}}
\newcommand{\PGL}{\mathrm{PGL}}
\newcommand{\PSp}{\mathrm{PSp}}
\newcommand{\PSO}{\mathrm{PSO}}
\newcommand{\SL}{\mathrm{SL}}
\newcommand{\SO}{\mathrm{SO}}
\newcommand{\Sp}{\mathrm{Sp}}
\newcommand{\Spin}{\mathrm{Spin}}
\newcommand{\sA}{\mathsf{A}}
\newcommand{\sB}{\mathsf{B}}
\newcommand{\sC}{\mathsf{C}}
\newcommand{\sD}{\mathsf{D}}
\newcommand{\sF}{\mathsf{F}}
\newcommand{\sG}{\mathsf{G}}
\renewcommand{\a}{\alpha}
\renewcommand{\b}{\beta}
\renewcommand{\d}{\delta}
\newcommand\e{\varepsilon}
\newcommand\g{\gamma}
\newcommand{\av}{\a^\vee}
\newcommand{\bv}{\b^\vee}
\newcommand{\dv}{\d^\vee}
\newcommand{\gv}{\g^\vee}
\newcommand{\Gv}{G^\vee}
\newcommand{\Lv}{L^\vee}
\newcommand{\Tv}{T^\vee}
\newcommand{\ftv}{\ft^\vee}
\newcommand{\Av}{A^\vee}
\newcommand{\tv}{\tau^\vee}
\newcommand{\Phiv}{\Phi^\vee}
\newcommand{\fgv}{\fg^\vee}
\newcommand{\flv}{\fl^\vee}
\newcommand{\Ga}{G\ass}
\newcommand{\Ta}{T\ass}
\newcommand{\Siv}{\Sigma^\vee}
\newcommand{\siv}{\sigma^\vee}
\newcommand{\Sa}{\Sigma\ass}
\newcommand{\ass}{^\wedge}
\newcommand{\p}{^p}
\newcommand{\+}{+\ldots+}
\renewcommand{\tilde}{\widetilde}
\newcommand{\XiS}{\tilde\Xi}
\newcommand{\XiSv}{\tilde\Xi^\vee}
\newcommand{\bild}[1]{\begin{tikzpicture}[scale=\scalar,baseline]
\def\Punkt{(0,0)}\x#1\end{tikzpicture}}
\def\ph{\vrule width 0pt height 8pt depth 2pt}
\def\Arechts{%
\draw \Punkt --++(1,0);
\edef\Punkt{($\Punkt+(1,0)$)}
\filldraw \Punkt circle (2pt);}
\def\Alinks{%
\filldraw \Punkt circle (2pt);}
\def\Brechts{%
\edef\PUNKT{($\Punkt+(0.05,0.04)$)}
\draw \PUNKT --++(0.8,0);
\edef\PUNKT{($\Punkt+(0.05,-0.04)$)}
\draw \PUNKT --++(0.8,0);
\edef\PUNKT{($\Punkt+(0.7,0.15)$)}
\draw\PUNKT--++(0.2,-0.15)--++(-0.2,-0.15);
\edef\Punkt{($\Punkt+(1,0)$)}
\filldraw \Punkt circle (2pt);}
\def\Crechts{%
\edef\PUNKT{($\Punkt+(0.15,0.04)$)}
\draw \PUNKT--++(0.8,0);
\edef\PUNKT{($\Punkt+(0.15,-0.04)$)}
\draw \PUNKT--++(0.8,0);
\edef\PUNKT{($\Punkt+(0.3,0.15)$)}
\draw\PUNKT--++(-0.2,-0.15)--++(0.2,-0.15);
\edef\Punkt{($\Punkt+(1,0)$)}
\filldraw \Punkt circle (2pt);}
\def\Drechts{%
\edef\PUNKT{($\Punkt+(0.5,0.5)$)}
\filldraw \PUNKT circle (2pt);
\edef\PUNKT{($\Punkt+(0.5,-0.5)$)}
\filldraw \PUNKT circle (2pt);
\draw \Punkt--++(0.5, 0.5);
\draw \Punkt--++(0.5,-0.5);}
\def\Glinks{%
\edef\PUNKT{($\Punkt+(-0.2,0.07)$)}
\draw \PUNKT--++(-0.8,0);
\edef\PUNKT{($\Punkt+(-0.2,-0.07)$)}
\draw \PUNKT--++(-0.8,0);
\edef\PUNKT{($\Punkt+(-0.1,0)$)}
\draw \PUNKT--++(-0.9,0);
\edef\PUNKT{($\Punkt+(-0.3,0.15)$)}
\draw\PUNKT--++(0.2,-0.15)--++(-0.2,-0.15);
\filldraw \Punkt circle (2pt);
}
\def\Grechts{%
\edef\PUNKT{($\Punkt+(0.2,0.07)$)}
\draw \PUNKT--++(0.8,0);
\edef\PUNKT{($\Punkt+(0.2,-0.07)$)}
\draw \PUNKT--++(0.8,0);
\edef\PUNKT{($\Punkt+(0.1,0)$)}
\draw \PUNKT--++(0.9,0);
\edef\PUNKT{($\Punkt+(0.3,0.15)$)}
\draw\PUNKT--++(-0.2,-0.15)--++(0.2,-0.15);
\edef\Punkt{($\Punkt+(1,0)$)}
\filldraw \Punkt circle (2pt);}
\def\ddd{
\edef\PUNKTa{($\Punkt+(0.08,0)$)}
\edef\PUNKTb{($\Punkt+(0.6,0)$)}
\draw \PUNKTa -- \PUNKTb;
\edef\PUNKTa{($\Punkt+(0.8,0)$)}
\filldraw \PUNKTa circle (0.3pt);
\edef\PUNKTa{($\Punkt+(1,0)$)}
\filldraw \PUNKTa circle (0.3pt);
\edef\PUNKTa{($\Punkt+(1.2,0)$)}
\filldraw \PUNKTa circle (0.3pt);
\edef\PUNKTa{($\Punkt+(1.4,0)$)}
\edef\PUNKTb{($\Punkt+(1.92,0)$)}
\draw \PUNKTa -- \PUNKTb;
\edef\PUNKTa{($\Punkt+(2,0)$)}
\filldraw \PUNKTa circle (2pt);
\edef\Punkt{($\Punkt+(2,0)$)}
}
\def\IndexRechts#1{%
\draw \Punkt node[right]{$\scriptstyle#1$};}
\def\IndexLinksex(#1,#2;#3){%
\draw (#1,#2) node[left]{$\scriptstyle#3$};}
\def\IndexOben#1{%
\draw \Punkt node[above]{$\ph\scriptstyle#1$};}
\def\IndexObenex(#1,#2;#3){%
\draw (#1,#2) node[above]{$\ph\scriptstyle#3$};}
\def\IndexUnten#1{%
\draw \Punkt node[below]{$\ph\scriptstyle#1$};}
\def\IndexUntenex(#1,#2;#3){%
\draw (#1,#2) node[below]{$\ph\scriptstyle#3$};}
\def\FarbeS[#1,#2]{%
\filldraw \Punkt node[above]{$\ph\scriptstyle#1$} node[below]{$\ph\scriptstyle#2$} circle (5pt);}
\def\FarbeSex(#1,#2;#3,#4){%
\filldraw (#1,#2) node[above]{$\ph\scriptstyle#3$} node[below]{$\ph\scriptstyle#4$} circle (5pt);}
\def\FarbeU{%
\edef\PUNKT{($\Punkt+(0,-0.4)$)}
\draw \PUNKT circle (5pt);}
\def\FarbeUex(#1){%
\edef\PUNKT{($(#1)+(0,-0.4)$)}
\draw \PUNKT circle (5pt);}
\def\FarbeO{%
\edef\PUNKT{($\Punkt+(0,0.4)$)}
\draw \PUNKT circle (5pt);}
\def\FarbeOex(#1){%
\edef\PUNKT{($(#1)+(0,0.4)$)}
\draw \PUNKT circle (5pt);}
\def\FarbeM{%
\draw \Punkt circle (5pt);}
\def\FarbeD{%
\filldraw \Punkt circle (5pt);}
\def\FarbeMex(#1){%
\draw (#1) circle (5pt);}
\def\lzz(#1,#2){%
\draw[thick,rotate around ={135:(#1,#2)}]
(#1,#2)--++(-0.1,0.1)
foreach \x in {1,...,4} {--++(-0.05,0.05)--++(-0.05,-0.05)}
--++(-0.06,0.06);}
\def\rzz(#1,#2){%
\draw[thick,rotate around ={45:(#1,#2)}]
(#1,#2)--++(0.1,0.1)
foreach \x in {1,...,4} {--++(0.05,0.05)--++(0.05,-0.05)}
--++(0.06,0.06);}
\def\zick(#1,#2){\draw[thick](#1,#2)--++(0.2,0.2);}
\def\zack(#1,#2){\draw[thick]($(#1,#2)+(-0.2,0.2)$)--(#1,#2);}
\def\dreizz(#1,#2){%
\draw[thick]
($(#1,#2)+(0.2,0)$)
foreach \x in {1,...,5} {--++(0.05,0.05)--++(0.05,-0.05)}
--++(0.05,0.05)
--++(0.051,-0.051);
}
\def\zweizz(#1,#2){%
\draw[thick]
($(#1,#2)+(0.2,0)$)
foreach \x in {1,...,3} {--++(0.05,0.05)--++(0.05,-0.05)}
--++(0.05,0.05)
--++(0.051,-0.051);
}
\def\vierzz(#1,#2){%
\draw[thick]
($(#1,#2)+(0.1,0)$)
foreach \x in {1,...,6} {--++(0.05,0.05)--++(0.05,-0.05)}
--++(0.05,0.05)
--++(0.051,-0.051);
}
\def\fuenfzz(#1,#2){%
\draw[thick]
($(#1,#2)+(0.2,0)$)
foreach \x in {1,...,9} {--++(0.05,0.05)--++(0.05,-0.05)}
--++(0.05,0.05)
--++(0.051,-0.051)
;
}
\def\zz{
\draw[thick]\Punkt--++(0.2,0.2)
foreach \x in {1,...,6} {--++(0.05,0.05)--++(0.05,-0.05)}
--++(0.2,-0.2);
}
\def\WurzelAn{\FarbeM\draw[thick]\Punkt--++(0.2,0.2)
foreach \x in {1,...,36} {--++(0.05,0.05)--++(0.05,-0.05)}
--++(0.2,-0.2);\Arechts\ddd\Arechts\FarbeM}
\def\scalar{0.5}
\def\ph{\relax}
\newcommand{\x}{\relax}
\title[]{Functoriality properties of the dual group}
\author[]{Friedrich Knop}
\address[]{Dept. Mathematik\\FAU Erlangen-Nürnberg\\
Cauerstraße 11\\
D-91058 Erlangen}
\subjclass[2010]{17B22, 14L30, 11F70} \keywords{Spherical variety,
  Langlands dual group, root system, algebraic group, reductive
  group}
\begin{document}

\begin{abstract}

  Let $G$ be a connected reductive group. Previously, is was shown
  that for any $G$-variety $X$ one can define a the dual group
  $G^\vee_X$ which admits a natural homomorphism with finite kernel to
  the Langlands dual group $G^\vee$ of $G$. Here, we prove that the
  dual group is functorial in the following sense: if there is a
  dominant $G$-morphism $X\to Y$ or an injective $G$-morphism $Y\to X$
  then there is a unique homomorphism with finite kernel
  $G^\vee_Y\to G^\vee_X$ which is compatible with the homomorphisms to
  $G^\vee$.

\end{abstract}

\maketitle         

\section{Introduction}

Let $G$ be a connected reductive group defined over an algebraically
closed field $k$ of characteristic zero. To any $G$-variety $X$ one
can attach a finite reflection group $W(X)$ (its ``little Weyl
group'') which, loosely speaking, determines the large scale geometry
of $X$ (see Brion \cite{Brion} and \cite{KnopAB}).

While it is known that $W(X)$ is a subgroup of the Weyl group of $G$,
it is, in general, not true that it is the Weyl group of some subgroup
of $G$. But surprisingly, the Langlands dual group $\Gv$ of $G$ does
contain such a subgroup.

At least in the case when $X$ is spherical, this was first hinted at
in work of Gaitsgory and Nadler, \cite{GaitsgoryNadler}, who
constructed a reductive subgroup of $\Gv$ whose Weyl group is most
likely equal to $W(X)$. Later Sakellaridis and Venkatesh, \cite{SV},
refined (at least for $X$ spherical) the description of a hypothetical
subgroup with Weyl group $W(X)$. In particular, they worked out
precisely how it should embed into $\Gv$. They also replaced the
subgroup by a particular finite cover $\Gv_X$, the \emph{dual group of
  $X$}, which carries more information about $X$.

In \cite{KnopSchalke}, it was shown that the Sakellaridis-Venkatesh
construction does indeed work, i.e., that there is a homomorphism
$\phi_X:\Gv_X\to\Gv$ as predicted in \cite{SV}. The approach of
\cite{KnopSchalke} is purely combinatorial.

In the present paper we investigate the question whether the
assignment $X\mapsto (\Gv_X,\phi_X)$ can be turned into a functor. To
this end, we are going to normalize the homomorphism $\phi_X$ in such
a way that it becomes unique up to conjugation by an element of the
maximal torus of $\Gv_X$. The main result of the present paper is:

\begin{theorem}\label{thm:MainTheorem}

  Let $X$ and $Y$ be two $G$-varieties. Assume that there is either a
  dominant $G$-morphism $f:X\to Y$ or a generically injective
  $G$-morphism $Y\to X$. Then there exists a unique homomorphism
  (necessarily with finite kernel) $\eta:\Gv_Y\to\Gv_X$ such that
  $\phi_Y=\phi_X\circ\eta$.

\end{theorem}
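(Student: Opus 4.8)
The plan is to reduce everything to the combinatorial data underlying the construction of \cite{KnopSchalke}. Recall that $(\Gv_X,\phi_X)$ is built from the \wss. of $X$: the weight lattice $\Xi_X$ serves as the cocharacter lattice $X_*(\Tv_X)$, the normalized spherical roots $\Sigma_X\subseteq\Xi_X$ serve as the coroots of $\Gv_X$, and $\phi_X$ is induced by the inclusion $\Xi_X\into X^*(T)=X_*(\Tv)$, which carries these coroots to coroots of $\Gv$. Since in characteristic zero a homomorphism of connected reductive groups carrying a maximal torus into a maximal torus is the same thing as a morphism of the associated root data, constructing $\eta$ amounts to producing such a morphism from the datum of $Y$ to that of $X$; the identity $\phi_Y=\phi_X\circ\eta$ will then be built in.

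First I would settle the weight lattices. A dominant $f\colon X\to Y$ gives an inclusion of $G$-field extensions $k(Y)\subseteq k(X)$, whence $\Xi_Y\subseteq\Xi_X$ inside $X^*(T)$. A generically injective $Y\to X$ identifies $Y$ birationally with a $G$-stable subvariety $X'\subseteq X$, and the $B$-weights occurring on $X'$ again form a sublattice of $\Xi_X$, so $\Xi_Y=\Xi_{X'}\subseteq\Xi_X$ as well. In both cases the inclusion $\Xi_Y\into\Xi_X$ is the cocharacter map of a homomorphism $\Tv_Y\to\Tv_X$; because $\Xi_X\into X_*(\Tv)$ is injective this is the \emph{only} torus homomorphism compatible with $\phi_Y$ and $\phi_X$, and its injectivity on cocharacters already forces $\eta$ to have finite kernel.

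The heart of the matter, and what I expect to be the main obstacle, is to check that the inclusion $\Xi_Y\into\Xi_X$ carries the coroots of $\Gv_Y$ into the coroots of $\Gv_X$, i.e.\ that the normalized spherical roots of $Y$ lie among those of $X$. The right tool should be the functoriality of the valuation cone and of the little Weyl group in Knop's theory of invariant valuations \cite{KnopAB}. Dualizing $\Xi_Y\into\Xi_X$ yields a surjection $p\colon\Xi_X^*\otimes\QQ\auf\Xi_Y^*\otimes\QQ$, and the valuation cones $\cV_X\subseteq\Xi_X^*\otimes\QQ$, $\cV_Y\subseteq\Xi_Y^*\otimes\QQ$ should be related by $\cV_Y=p(\cV_X)$ in the dominant case and by localization of $\cV_X$ at the appropriate face in the generically injective case. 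Since $W(X)$ and $W(Y)$ are the reflection groups admitting these cones as fundamental domains, such a comparison should realize $\Sigma_Y$ inside $\Sigma_X$ and exhibit $W(Y)$ as a reflection subgroup of $W(X)$. The delicate point is the normalization: a priori a spherical root of $Y$ could show up as a positive rational multiple of, or an integral combination of, spherical roots of $X$, so one must track precisely the normalization of \cite{KnopSchalke} to guarantee that coroots land on coroots and not merely in their rational span.

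Granting this, the conclusion is formal. The resulting morphism of \wsss. yields $\eta\colon\Gv_Y\to\Gv_X$ restricting to the torus map above, hence of finite kernel, with $\phi_Y=\phi_X\circ\eta$ holding because both sides are the common composite $\Xi_Y\into\Xi_X\into X_*(\Tv)$ on cocharacters and are compatible on root subgroups by construction. Uniqueness is then clean: the torus part of $\eta$ is forced by the injectivity of $\Xi_X\into X_*(\Tv)$, while on unipotent subgroups $\phi_X$ is injective—in characteristic zero its finite kernel contains no nontrivial unipotent element—so $\phi_X\circ\eta=\phi_X\circ\eta'$ forces $\eta=\eta'$ there as well, and $\Gv_Y$ is generated by $\Tv_Y$ together with its root subgroups.
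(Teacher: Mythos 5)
You have the right skeleton---reduce everything to a morphism of the underlying combinatorial data, with the torus part forced by $\Xi(Y)\subseteq\Xi(X)$---and this part matches the paper's first lemma. But the step you defer (``granting this, the conclusion is formal'') is the entire content of the paper, and the statement you propose to prove there is in fact false. For a dominant $f:X\to Y$ it is \emph{not} true that $\Sigma(Y)\subseteq\Sigma(X)$: already for $X=\PGL(n+1)/P\to Y=\PGL(n+1)/\G_m\SL(n)$ one has $\Sigma(X)=\{\a_1\+\a_\nu,\ \a_{\nu+1}\+\a_n\}$ while the unique spherical root of $Y$ is $\tau=\a_1\+\a_n=\sigma_1+\sigma_2$, and in the $\sD_n$ case one even finds $\tau=2\sigma_1+\sigma_2$. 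So a spherical root of $Y$ is in general a nontrivial positive integral combination of spherical roots of $X$, and no projection formula for valuation cones can force it to be one of them: knowing $\cZ(Y)=p(\cZ(X))$ only places $\Sigma(Y)$ on extremal rays of $\QQ_{\ge0}\Sigma(X)\cap(\Xi(Y)\otimes\QQ)$, which is exactly the ``rational span'' situation you flag and then set aside. The statement that actually holds, and that must be proved, lives on the dual side: $G'_Y\subseteq G'_X$, i.e.\ $\tau\ass$ is a (generally non-simple) root of $\Gv_X$ and the line $\fgv_{Y,\tv}$ lands on the correct line $\fgv_{\tv}\subseteq\fgv$ prescribed by the normalization \eqref{eq:defsv}.

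The paper establishes this not by a general valuation-theoretic argument but by a long chain of reductions (to $G$ adjoint, $Y=G/H$ homogeneous, spherical, cuspidal of rank one with $H$ connected, $X=G/P$ with $P\subset H$ a maximal parabolic, $f$ proper), followed by a case-by-case verification against the Akhiezer--Panyushev classification: the Luna data of every such $G/P$ are computed in the appendix, and one then checks in each case either that $\tau\ass$ lies in $\Phi\ass_X$, or that $\tv$ is the highest root of $\Phi\ass_X$, or---in the $\sD_n$ case---the bracket identity \eqref{eq:Dn}, which genuinely uses the choice of root vectors in \eqref{eq:defsv}. None of this is formal; the paper explicitly remarks that it has no conceptual argument even for the sphericality of the varieties $G/P$ that arise. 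Two smaller points: your uniqueness argument is correct but can be streamlined (two candidates $\eta,\eta'$ with $\phi_X\circ\eta=\phi_X\circ\eta'$ differ by a morphism from the connected group $\Gv_Y$ into the finite group $\ker\phi_X$, hence coincide), and your reduction of the generically injective case to the dominant one is essentially the paper's, via a blow-up realizing $Y$ as the image of a $G$-stable divisor together with the codimension-one comparison theorem. As it stands, however, the proposal leaves the central claim unproved.
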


In the body of the paper, we prove a more precise version of the
theorem (see Theorems \ref{thm:main2} and \ref{thm:main3}).

The proof of \cref{thm:MainTheorem} proceeds in several steps: first
we treat the case of a dominant morphism. First, the theorem is
reduced to the case when both $X$ and $Y$ are homogeneous with $Y$
being of rank $1$ and $f$ being proper. Then we use a classification
(due to Akhiezer \cite{Akhiezer} and Panyushev
\cite{PanyushevRankOne}) to check the assertion case-by-case. To this
end, we determine, given a spherical $G$-variety $G/H$ of rank $1$,
the Luna data of $G/P$ where $P$ runs through all maximal parabolic
subgroups of $H$. This might be of independent interest since the
morphisms $G/P\to G/H$ are in a sense minimal among all dominant
$G$-morphisms. The case of injective morphisms will finally follow
from the dominant one.

As opposed to \cite{KnopSchalke} we are going to argue much more
geometrically than combinatorially. This is is due to the fact that
the the \wsss. used in \cite{KnopSchalke} do not possess sufficient
functorial properties.

\section{The dual group and distinguished homomorphisms}
\label{sec:dualgroup}

Let $G$ be a connected reductive group defined over an algebraically
closed ground field $k$ of characteristic $0$. Let $B\subseteq G$ be a
Borel subgroup and $T\subseteq B$ a maximal torus. Let
$\Lambda:=\Xi(B)$ be the weight lattice, $\Phi\subset\Lambda$ the root
system of $G$, and $S\subseteq\Phi$ the set of simple roots with
respect to $B$.

We recall the dual group $\Gv_X$ of a $G$-variety $X$. A rational
function $f\in k(X)$ is $B$-semiinvariant with character
$\chi_f\in\Lambda$ if $f(b^{-1}x)=\chi_f(b)f(x)$ for all $b\in B$ and
$x\in X$ where both sides are defined. All characters $\chi_f$ form a
subgroup $\Xi=\Xi(X)$ of $\Lambda$, the \emph{weight lattice of
  $X$}. The rank of $\Xi(X)$ is called the \emph{rank of $X$} and is
denoted by $\rk X$.

Now consider a discrete valuation $v:k(X)\to\QQ\cup\{\infty\}$. It is
called \emph{central} if it is $G$-invariant and restricts to the
trivial valuation on the field $k(X)^B$ of rational
$B$-invariants. Then $v(f)$ depends, for any $B$-semiinvariant $f$,
only on its character $\chi_f$. Thus we get a map
\[
  \rho:\cZ(X)\to\cN(X):=\Hom(\Xi,\QQ)
\]
where $\cZ(X)$ is the set of all central valuations. It was proven in
\cite{LunaVust} that $\rho$ is injective. Hence we may and will
identify $\cZ(X)$ with a subset of the $\QQ$-vector space $\cN(X)$.

One can show that $\cZ(X)$ is a finitely generated convex cone which
is not contained in a hyperplane. Let
\[
  \Sigma=\Sigma(X)=\{\sigma_1,\ldots,\sigma_s\}\subseteq\Xi_\QQ:=\Xi\otimes\QQ
\]
be a minimal set of outward normal vectors (so-called \emph{spherical
  roots of $X$}) such that
\[
  \cZ(X)=\{a\in\cN(X)\mid a(\sigma_1)\le0,\ldots,a(\sigma_s)\le0\}.
\]
The $\sigma_i$ are only unique up to positive factors and there are
several normalizations possible. The one which we are adopting uses
the fact that each $\sigma_i$ lies in the intersection
$\Xi_\QQ\cap\QQ S$. Thus we can and will normalize $\sigma_i$ is such
a way that it is primitive in the root lattice $\ZZ S$. Therefore,
every $\sigma_i$ is a linear combination
$\sum_{\alpha\in S}n_\alpha\alpha$ with integral coprime coefficients
which one can show to be non-negative. The \emph{support} $|\sigma_i|$
of $\sigma_i$ is the set $\{\alpha\in S\mid n_\alpha>0\}$. More
generally, we put
$|\Sigma_0|=\cup_{\sigma\in\Sigma_0}|\sigma|\subseteq S$ for any
subset $\Sigma_0\subseteq\ZZ S$.

A third invariant of $X$ is a certain set $S\p=S\p(X)\subseteq S$ of
simple roots. It consists of all $\alpha\in S$ (called \emph{parabolic
  for $X$}) such that $P_\alpha x=Bx$ for generic $x\in X$. Here
$P_\alpha\subseteq G$ is the minimal parabolic subgroup corresponding
to $\alpha$. In other words, the parabolic subgroup $Q(X)$
corresponding to $S\p$ is the stabilizer of a generic $B$-orbit.

The coefficients $n_\alpha$ are always non-negative. In fact much more
is true. One can show that the triple
$(|\sigma|,\sigma,S\p\cap|\sigma|)$ will always appear in
\cref{tab:NSR}. The items correspond to spherical varieties of rank
$1$ (listed in \cref{tab:rankone}) which will be explained in more
detail in \cref{sec:rankone}.

\begin{table}[!h]
  \caption{}\label{tab:1}
  \label{tab:NSR}
  \begin{tabular}{LLL}

    |\sigma|&\sigma&S\p\cap|\sigma|\\

    \noalign{\smallskip\hrule\smallskip}
    \sA_1&\a_1&\leer\\
    \sA_n,\ n\ge2&\a_1\+\a_n&\{\a_2,\ldots,\a_{n-1}\}\\
    \sB_n,\ n\ge2&\a_1\+\a_n&\{\a_2,\ldots,\a_n\}\\
    \sB_n,\ n\ge2&\a_1\+\a_n&\{\a_2,\ldots,\a_{n-1}\}\\
    \sC_n,\ n\ge3&\a_1+2\a_2\+2\a_{n-1}+\a_n&\{\a_1,\a_3,\ldots,\a_n\}\\
    \sC_n,\ n\ge3&\a_1+2\a_2\+2\a_{n-1}+\a_n&\{\a_3,\ldots,\a_n\}\\
    \sF_4&\a_1+2\a_2+3\a_3+2\a_4&\{\a_1,\a_2,\a_3\}\\
    \sG_2&2\a_1+\a_2&\{\a_2\}\\
    \sG_2&\a_1+\a_2&\{\a_1,\a_2\}\\

    \noalign{\smallskip\hrule\smallskip}

    \sD_2&\a_1+\a_2&\leer\\
    \sD_n,\ n\ge3&2\a_1\+2\a_{n-2}+\a_{n-1}+\a_n&\{\a_2,\ldots,\a_n\}\\
    \sB_3&\a_1+2\a_2+3\a_3&\{\a_1,\a_2\}\\

    \noalign{\smallskip\hrule\smallskip}

  \end{tabular}
\end{table}

One unfortunate feature of the normalization of spherical roots is the
possibility of $\Sigma\not\subseteq\Xi$. Therefore, we define the
modified weight lattice of $X$ as
\[\label{eq:XiS}
  \XiS=\XiS(X):=\Xi(X)+\ZZ\Sigma(X).
\]
According to \cite{KnopSchalke}*{Prop.~5.4}, the triple
$(\XiS,\Sigma,S\p)$ is a \emph{weak spherical datum}, i.e., satisfies:

\begin{itemize}

\item $\<\XiS\mid\av\>=0$ for all $\a\in S\p$.

\item $\<\XiS\mid\av-\bv\>=0$ whenever $\sigma=\a+\b\in\Sigma$ is of
  type $\sD_2$.

\item $\<\b\mid\av\>\ne-1$ whenever $\a,\b\in S$ with
  $\a,\a+\b\in\Sigma$.
    
\end{itemize}

Looking at \cref{tab:NSR} one realizes that there are two types of
spherical roots namely those which are also roots of $G$ and those
which are not. These types are separated by the middle horizontal
line. Each non-root $\sigma$ is the sum of two strongly orthogonal
roots $\gamma_1,\gamma_2$ as can be seen by inspection of
\cref{tab:assocroots}.  The set $\{\gamma_1,\gamma_2\}$ can be made
unique by requiring that
\[
  \gv_1-\gv_2=\dv_1-\dv_2\text{ with }\d_1,\d_2\in S.
\]
\begin{table}[!h]
  \caption{}
  \label{tab:assocroots}
  \begin{tabular}{LLLL}
    |\sigma|&\g_1,\g_2&\gv_1,\gv_2&\dv_1,\dv_2\\
    \hline
    \sD_2&\a_1,\ \a_2&\av_1,\ \av_2&\av_1,\ \av_2\\
    \hline
    \sD_n, n\ge3&(\a_1\+\a_{n-2})+\a_{n-1},&(\av_1\+\av_{n-2})+\av_{n-1},&\av_{n-1},\ \av_n\\
            &(\a_1\+\a_{n-2})+\a_n&(\av_1\+\av_{n-2})+\av_n\\
    \hline
    \sB_3&\a_1+\a_2+2\a_3,\ \a_2+\a_3&\av_1+\av_2+\av_3,\ 2\av_2+\av_3&\av_1,\ \av_2\\
    \hline
  \end{tabular}
\end{table}
It then follows that the restrictions of $\gv_1$ and $\gv_2$ to $\XiS$
coincide. Thus they define an element of $\XiSv:=\Hom(\XiS,\ZZ)$ which
is denoted by $\siv$. On the other hand, if $\sigma\in\Phi$ then the
coroot $\siv$ already has a meaning. Let
$\Siv:=\{\siv\mid\sigma\in\Sigma\}$. A fundamental fact about
\wsss. is the following

\begin{theorem}[\cite{KnopSchalke}*{Thm.~7.1}]

  Let $(\XiS,\Sigma,S\p)$ be a \wss.. Then $(\XiS,\Sigma,\XiSv,\Siv)$
  is a based root datum.

\end{theorem}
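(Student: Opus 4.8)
The lattices $\XiS$ and $\XiSv=\Hom(\XiS,\ZZ)$ are finitely generated free abelian groups in perfect duality, so the real content is that $\sigma\mapsto\siv$ is a bijection $\Sigma\to\Siv$ whose matrix of Cartan integers $c_{\sigma\tau}:=\<\sigma\mid\tv\>$ is a Cartan matrix of finite type. Once this is established, the reflections $s_\sigma(x)=x-\<x\mid\siv\>\sigma$ preserve $\XiS$ (since $\sigma\in\ZZ\Sigma\subseteq\XiS$ and $\siv$ is integer-valued), their transposes preserve $\XiSv$, the root system they generate is finite, and $\Sigma$ is by construction a base; hence $(\XiS,\Sigma,\XiSv,\Siv)$ is a based root datum. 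I would first note that integrality $c_{\sigma\tau}\in\ZZ$ is automatic because $\siv\in\Hom(\XiS,\ZZ)$ and $\Sigma\subseteq\XiS$.

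For the diagonal entries: if $\sigma\in\Phi$ then $\siv$ is the genuine coroot and $\<\sigma\mid\siv\>=2$ tautologically. If $\sigma$ is a non-root, then by \cref{tab:assocroots} we have $\sigma=\g_1+\g_2$ with $\g_1,\g_2$ strongly orthogonal and $\siv=\gv_1|_{\XiS}=\gv_2|_{\XiS}$; therefore $\<\sigma\mid\siv\>=\<\g_1+\g_2\mid\gv_1\>=2+0=2$, using $\<\g_2\mid\gv_1\>=0$. This also yields injectivity of $\sigma\mapsto\siv$: if $\siv=\tv$ with $\sigma\ne\tau$ then $2=\<\sigma\mid\siv\>=\<\sigma\mid\tv\>=c_{\sigma\tau}$, contradicting the off-diagonal bound established next.

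For the off-diagonal entries I would expand $\sigma=\sum_{\a\in S}n_\a\a$ and evaluate $c_{\sigma\tau}=\<\sigma\mid\tv\>$ through the Cartan matrix of $G$, writing $\tv$ either as the ordinary coroot $\tau^\vee$ (root case) or as $\gv_1|_{\XiS}$ (non-root case). The three defining properties of a weak spherical datum enter exactly here: the vanishing $\<\XiS\mid\av\>=0$ for $\a\in S\p$ suppresses the contributions of the parabolic simple roots and, together with the $\sD_2$-identity $\<\XiS\mid\av-\bv\>=0$, is precisely what forces $\gv_1$ and $\gv_2$ to agree on $\XiS$ so that $\siv$ is well defined; and the condition $\<\b\mid\av\>\ne-1$ excludes the single interaction that would otherwise produce a positive Cartan integer. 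Combining these with the local shape of each spherical root forced by \cref{tab:NSR}, I expect to obtain $c_{\sigma\tau}\le0$ for $\tau\ne\sigma$ and, by a case analysis of how two supports can overlap or abut, the rank-two bound $c_{\sigma\tau}c_{\tau\sigma}\le3$.

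The main obstacle is upgrading these local, pairwise verifications to \emph{global} finite type, since a generalized Cartan matrix all of whose rank-two minors are of finite type may still be affine or indefinite. To close this gap I would assemble the full matrix $(c_{\sigma\tau})$ into the weighted diagram whose vertices are the $\sigma_i$ and whose edges record $c_{\sigma\tau}c_{\tau\sigma}$ together with the relative sizes of the two entries, and then prove that each connected component is one of the finite-type Dynkin diagrams. Concretely, the non-negativity and support constraints of \cref{tab:NSR}, the bound $c_{\sigma\tau}c_{\tau\sigma}\le3$, and the exclusion of the forbidden $\<\b\mid\av\>=-1$ configuration should restrict the admissible components enough to rule out every affine and indefinite diagram, leaving only positive-definite ones. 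This passage from pairwise compatibility to positive-definiteness of the entire Cartan matrix, carried out as a finite type-by-type inspection, is where essentially all of the difficulty lies.
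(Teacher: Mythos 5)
This statement is imported verbatim from \cite{KnopSchalke}*{Thm.~7.1}; the present paper offers no proof of it, so your attempt has to be measured against the argument of that reference, whose strategy the paper itself summarizes in \cref{sec:rankone} (``$\Sa$ is the basis of a maximal rank subgroup $\Ga_X\subseteq\Gv$ \dots\ the root system of $\Gv_X$ is obtained from that of $\Ga_X$ by a process called folding''). Your elementary observations are fine as far as they go: integrality of the pairings, $\<\sigma\mid\siv\>=2$ via strong orthogonality of $\g_1,\g_2$, injectivity of $\sigma\mapsto\siv$, and the role of the three axioms of a \wss. in making $\siv$ well defined and in excluding positive off-diagonal entries. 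But both the off-diagonal inequality $c_{\sigma\tau}\le 0$ and, above all, the global finite-type property are left as declared intentions (``I expect to obtain\dots'', ``should restrict the admissible components enough''), and the route you propose for the latter --- classify all assembled diagrams and rule out affine and indefinite ones by inspection --- is not a proof and is not clearly feasible: the axioms of a \wss. are purely pairwise, and nothing in a rank-two analysis by itself excludes, say, an affine cycle of type $\tilde\sA_n$ or the affine degenerations of the other diagrams. You would also need linear independence of $\Sigma$, which your argument never addresses.

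The missing idea is that finiteness should be \emph{inherited} rather than re-proved. One first shows that the set $\Sa=\bigcup_{\sigma\in\Sigma}\sigma\ass$ of associated coroots (as in \eqref{eq:ass} and \cref{tab:assocroots}) is linearly independent and pairwise obtuse, hence is a base of a root subsystem $\Phi\ass_X$ of the \emph{finite} root system $\Phiv$; positive definiteness of its Cartan matrix is then automatic because the Killing form of $\fgv$ restricts to a positive definite form. The datum $(\XiSv,\Siv)$ is then obtained from $(\Lambda^\vee,\Sa)$ by folding, i.e.\ by identifying $\gv_1$ and $\gv_2$ through restriction to $\XiS$, and folding a finite based root system along such an identification again yields a finite based root system. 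This replaces your global diagram classification by a local verification (that $\Sa$ sits inside $\Phiv$ as a base of a subsystem) plus a standard structural fact, and is where the actual content of \cite{KnopSchalke}*{Thm.~7.1} lies. As it stands, your proposal is an accurate road map of the easy half of the theorem together with an acknowledgement that the hard half remains to be done.
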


This theorem gives rise to the following definition.

\begin{definition}

  The dual group of a $G$-variety $X$ is the connected \emph{complex}
  reductive group $\Gv_X$ whose based root datum is the dual root
  datum $(\XiSv,\Siv,\XiS,\Sigma)$.

\end{definition}

\begin{remarks}

  \textit{i)} The Weyl group of $\Gv_X$ is, almost by definition,
  equal to the little Weyl group $W(X)$ of $X$. Observe that, due to
  our normalization, $\Sigma(X)$ and $W(X)$ determine each other
  unlike, e.g., the normalization used in \cite{KnopAuto} where the
  set of spherical roots carries additionally information about the
  automorphism group of $X$.

  \textit{ii)} The normalization of the spherical roots by being
  primitive in $\ZZ S$ is forced on us by the requirement that $\Gv_X$
  should map to $\Gv$ with finite kernel (see \cref{thm:schalke}
  below). This in turn forces the extension \eqref{eq:XiS} of
  character groups. Note, however, that for the representation
  theoretic purposes of \cite{SV} this is the wrong lattice since it
  yields multiplicities which are too big.

  \textit{iii)} In the Langlands program, the most common approach is
  to define the dual group only over $\CC$ and we follow this
  tradition. Working also simplifies some definitions and arguments,
  most notably \cref{def:distinguished} of a distinguished
  homomorphism in Lie algebraic terms. Nevertheless, it should be
  remarked that $\Gv_X$ can be defined over $\ZZ$ and that
  distinguished homomorphism exist over $\ZZ[\frac12]$ (see
  \cite{KnopSchalke}*{Prop.~11.1}). Also our main
  \cref{thm:MainTheorem} holds in that generality.

\end{remarks}

The dual group of $G$, i.e., the connected complex reductive group
whose root datum is dual to that of $G$ is denoted by $\Gv$. It is
equipped with a pinning, i.e., a choice of generating root vectors
$e_{\av}\in\fgv_{\av}$ with $\a\in S$.

It was proved in \cite{KnopSchalke} that there exists an almost
canonical homomorphism $\phi:\Gv_X\to\Gv$ with finite kernel. To make
this more precise, we define for each $\sigma\in\Sigma(X)$ a
one-dimensional subspace $\fgv_{\siv}$ of $\fgv$ as follows:
\[\label{eq:defsv}
  \fgv_{\siv}:=
  \begin{cases}
    \fgv_{\siv}&\text{if }\sigma\in\Phi,\\
    [\fgv_{\bv},e_{\dv_1}-e_{\dv_2}]&\text{if $\sigma$ is of type $\sD_{n\ge3}$,}\\
    [\fgv_{\bv},2e_{\dv_1}-e_{\dv_2}]&\text{if $\sigma$ is of type $\sB_3$,}\\
    \CC(e_{\dv_1}-e_{\dv_2})&\text{if $\sigma$ is of type $\sD_2$.}\\
  \end{cases}
\]
Here $\bv:=\gv_1-\dv_1=\gv_2-\dv_2$ in case $\sigma\not\in\Phi$. It is
easy to check that $\bv\in\Phiv$ unless $\sigma$ is of type $\sD_2$
when $\bv=0$. The definition implies that
\[
  \fgv_{\siv}\subseteq\fgv_{\gv_1}\oplus\fgv_{\gv_2}\subseteq\fgv.
\]

Next observe that the maximal tori $\Tv\subseteq\Gv$ and
$\Av_X\subseteq\Gv_X$ have the cocharacter group $\Lambda$ and
$\XiS(X)$, respectively. Therefore, the inclusion
$\XiS(X)\into\Lambda$ induces a homomorphism $\phi_A:\Av_X\to\Tv$ with
finite kernel.

\begin{definition}\label{def:distinguished}

  A homomorphism $\phi:\Gv_X\to\Gv$ is called \emph{distinguished} if
  $\res_{\Av_X}\phi=\phi_A$ and $\phi(\fgv_{X,\siv})=\fgv_{\siv}$ for
  all $\sigma\in\Sigma(X)$.

\end{definition}

Here is an immediate consequence of the main result of
\cite{KnopSchalke}:

\begin{theorem}\label{thm:schalke}

  Let $X$ be a $G$-variety. Then:

  \begin{enumerate}

  \item There exists a distinguished homomorphism
    $\phi_X:\Gv_X\to\Gv$.

  \item Any other distinguished homomorphism is of the form
    $\phi\circ\Ad(a)$ with $a\in\Av_X$.

  \item The kernel of $\phi_X$ is finite.

  \item The image $G^*_X:=\phi_X(\Gv_X)$ is a well-defined subgroup of
    $\Gv$, i.e., it is independent of the choice of $\phi_X$.

  \end{enumerate}

\end{theorem}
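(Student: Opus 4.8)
The plan is to deduce all four assertions from the homomorphism constructed in \cite{KnopSchalke} together with the standard rigidity properties of homomorphisms of reductive groups, the only genuinely new ingredient being the reinterpretation of that construction in the language of \cref{def:distinguished}.

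First I would establish existence. By \cite{KnopSchalke} there is a homomorphism $\psi:\Gv_X\to\Gv$ with finite kernel realizing the prediction of \cite{SV}; in particular $\psi$ maps the maximal torus $\Av_X$ into a maximal torus of $\Gv$ and induces on cocharacter lattices the inclusion $\XiS\into\Lambda$. After conjugating $\psi$ by a suitable element of $\Gv$ I may assume $\psi(\Av_X)\subseteq\Tv$ with $\res_{\Av_X}\psi=\phi_A$. The root subgroups of $\Gv_X$ attached to the simple roots $\siv$ ($\sigma\in\Sigma$) are then carried into root subgroups of $\Gv$, and the computation in \cite{KnopSchalke} identifies the image of each $\fgv_{X,\siv}$ with the line $\fgv_{\siv}$ of \eqref{eq:defsv}; indeed this is precisely how \eqref{eq:defsv} is set up. A final conjugation by an element of $\Tv$ moves these images onto the prescribed lines, producing a distinguished $\phi_X$ and establishing \textit{i)}.

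For \textit{ii)}, the essential point, let $\phi,\phi'$ be two distinguished homomorphisms. By definition they agree on $\Av_X$ and both send each simple root space $\fgv_{X,\siv}$ into the same line $\fgv_{\siv}$; hence $d\phi'|_{\fgv_{X,\siv}}=c_\sigma\,d\phi|_{\fgv_{X,\siv}}$ for nonzero scalars $c_\sigma$, $\sigma\in\Sigma$. Because the simple roots $\Siv$ are linearly independent characters of $\Av_X$, the homomorphism $\Av_X\to(\CC^\times)^\Sigma$, $a\mapsto(\siv(a))_{\sigma}$, is surjective, so I can choose $a\in\Av_X$ with $\siv(a)=c_\sigma$ for all $\sigma$. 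Then $\phi\circ\Ad(a)$ agrees with $\phi'$ on $\Av_X$ and on every simple root space. Since $\Gv_X$ is generated by $\Av_X$ together with the simple root subgroups and their opposites, and since a homomorphism of connected reductive groups is determined by its restriction to a maximal torus and the induced maps on the simple root spaces (the rigidity, or isogeny, theorem), I conclude $\phi'=\phi\circ\Ad(a)$. I expect the main obstacle to lie exactly here: one must check that the two normalizations built into \cref{def:distinguished}---the torus condition and the root-space condition---pin the homomorphism down up to \emph{precisely} $\Ad(\Av_X)$, and in particular that every admissible rescaling of the simple root vectors is realized inside $\Av_X$ rather than only in the larger torus $\Tv$.

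Finally, \textit{iii)} and \textit{iv)} are formal consequences. The kernel of $\phi_X$ is finite by \cite{KnopSchalke}; alternatively, $\res_{\Av_X}\phi_X=\phi_A$ has finite kernel, being induced by the injection of cocharacter lattices $\XiS\into\Lambda$, and a connected normal subgroup of $\Gv_X$ meets $\Av_X$ in a maximal torus of that subgroup, so $\ker\phi_X$ cannot be positive-dimensional. For \textit{iv)}, if $\phi$ and $\phi'$ are distinguished then $\phi'=\phi\circ\Ad(a)$ by \textit{ii)}, and since $\Ad(a)$ is an automorphism of $\Gv_X$ we get $\phi'(\Gv_X)=\phi(\Gv_X)$; thus $G^*_X$ is independent of the choice of distinguished homomorphism.
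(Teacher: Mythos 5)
Your proposal is correct and follows essentially the same route as the paper: existence is deduced from the homomorphism constructed in \cite{KnopSchalke} followed by a conjugation by a torus element (the paper phrases this via adapted homomorphisms and the associated group $\Ga_X$, citing loc.\ cit.\ Thm.~7.7 and Thm.~7.10 for the simultaneous realizability of the required rescalings), and the remaining parts are formal consequences. The paper dismisses \textit{ii)}--\textit{iv)} with ``the other parts follow from the construction,'' whereas you supply the rigidity argument explicitly --- in particular the observation that linear independence of the simple roots $\Siv$ makes $\Av_X\to(\CC^\times)^\Sigma$ surjective, so every admissible rescaling of the root lines is realized by $\Ad(a)$ with $a\in\Av_X$ --- which is precisely the point that needs checking.
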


\begin{proof}

  \cite{KnopSchalke}*{Thm.~7.7} shows the existence of an adapted
  homomorphism $\phi:\Gv_X\to\Gv$ which means that $\fgv_{X,\siv}$ is
  mapped just diagonally into
  $\fgv_{\gv_1}\oplus\fgv_{\gv_2}\subseteq\fgv$ in case
  $\sigma\not\in\Phi$. More precisely, the image of $\phi$ is
  contained in the associated group $\Ga_X\subseteq\Gv$ (see
  loc.cit. Def.~7.2 and Thm 7.3). Thus, there an element $t$ of
  $\Ta_{\rm ad}$, the maximal torus of the adjoint group of $\Ga_X$,
  such that $\Ad(t)\circ\phi$ is distinguished (cf. loc.cit
  Thm.~7.10). The other parts follow from the construction of
  $\phi_X$.
\end{proof}

\begin{remarks}

  \emph{i)} Let $\Lv_X\subseteq\Gv$ be the Levi subgroup corresponding
  to $S\p(X)\subseteq S$. The pinning of $\Gv$ induces a pinning of
  $\Lv_X$. This in turn gives rise to a canonical principal
  homomorphism $\psi:\SL(2,\CC)\to\Lv_X$. Then it was shown,
  \cite{KnopSchalke}*{Prop.~9.10}, that the images of $\phi_X$ and
  $\psi$ commute with each other, i.e., they combine to a group
  homomorphism $\Gv_X\times SL(2)\to\Gv$. In fact, the normalization
  \eqref{eq:defsv} for $\sigma$ of type $\cD_{n\ge3}$ or $\sB_3$ is
  equivalent to this commutation property.

  \emph{ii)} Distinguished homomorphisms are invariant under certain
  automorphisms of $G$. More precisely, let $E$ be a group of
  automorphisms of the based root datum of $G$. Then $E$ acts
  canonically on $\Gv$ by fixing the chosen pinning $\{e_{\av}\}$. We
  say that \emph{$E$ and $X$ are compatible} if $E$ fixes $\Xi(X)$,
  $\Sigma(X)$, and $S\p(X)$. Then \eqref{eq:defsv} implies
  \[\label{eq:invariance}
    {}^s\fgv_{\siv}=\fgv_{{}^s\siv}\text{ for all $s\in E$ and
      $\sigma\in\Sigma(X)$.}
  \]
  This follows from \eqref{eq:defsv} together with the observation
  that ${}^s\dv_i=\overline\d^\vee_i$ in case $\sigma$ and
  $\overline\sigma={}^s\sigma$ are both of type $\sB_3$. Now
  \eqref{eq:invariance} implies that $E$ fixes $G^*_X$. Moreover, the
  $E$-action lifts uniquely to $\Gv_X$ such that $\phi_X$ is
  $E$-equivariant. Observe, though, that $E$ will in general
  \emph{not} fix any pinning of $\Gv_X$, i.e., the action may be
  \emph{non-standard} in the sense of \cite{KnopSchalke}*{\S10}.

  A typical situation we have in mind is if $G$ and $X$ are defined
  over a subfield $k_0\subseteq k$. Then the Galois group $E$ of $k_0$
  acts on the based root datum of $G$ by means of the so-called
  $*$-action. Since $X$ is defined over $k_0$ it is known (see
  \cite{KK}) that $E$ and $X$ are compatible.

  \emph{iii)} The normalization \eqref{eq:defsv} also plays a role in
  the proof of \cref{thm:main2} below. More precisely, it is needed to
  prove equation \eqref{eq:Dn}.

\end{remarks}

Now we come to homomorphisms between different dual groups. For this
let $X$, $Y$ be two $G$-varieties and let $\phi_X$, $\phi_Y$ be
distinguished homomorphisms.  A homomorphism $\eta:\Gv_Y\to\Gv_X$ is
called \emph{distinguished} if $\phi_Y=\phi_X\circ\eta$. Since
$\phi_X$ and $\phi_Y$ have finite kernel, $\eta$ is unique with finite
kernel if it exists. Here is the main result of the paper:

\begin{theorem}\label{thm:main2}

  Let $\phi:X\to Y$ be a dominant $G$-morphism between two
  $G$-varieties. Then there exists a distinguished homomorphism
  $\eta:\Gv_Y\to\Gv_X$. This implies, in particular, that
  $G^*_Y\subseteq G^*_X\subseteq\Gv$.

\end{theorem}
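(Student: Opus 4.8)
The plan is to build $\eta$ from a compatible map of the defining lattices and then to pin down its effect on root subgroups by reducing everything to the rank-one classification.

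First I would record how the basic invariants transform under a dominant morphism $f\colon X\to Y$. The pullback $f^*\colon k(Y)\into k(X)$ sends a $B$-semiinvariant with character $\chi$ to one with the same character, so $\Xi(Y)\subseteq\Xi(X)$; dually, restricting a central valuation of $X$ to $k(Y)$ yields a central valuation of $Y$, so the restriction map $\cN(X)\to\cN(Y)$ carries $\cZ(X)$ into $\cZ(Y)$. Since the Luna data are birational invariants computed from the generic $G$-orbit, I may assume $X$ and $Y$ homogeneous. The containment of valuation cones forces each spherical root of $Y$ to be a nonnegative combination of those of $X$, and I would upgrade this (ultimately case-by-case, see below) to the integral statement $\XiS(Y)\subseteq\XiS(X)$ (recall \eqref{eq:XiS}). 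This inclusion of cocharacter lattices furnishes the torus part $\Av_Y\to\Av_X$ of $\eta$ and makes the torus restrictions of $\phi_Y$ and of $\phi_X\circ\eta$ agree, as both are induced by the inclusion $\XiS(Y)\subseteq\Lambda$.

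Next I would reduce to the essential case. A homomorphism $\eta$ with finite kernel is determined by the lattice inclusion together with its effect on the simple root subgroups $\fgv_{Y,\tv}$ of $\Gv_Y$ (with $\tau\in\Sigma(Y)$), and by \cref{thm:schalke} the requirement $\phi_Y=\phi_X\circ\eta$ forces, for each such $\tau$, that $\phi_X$ carry $\eta(\fgv_{Y,\tv})$ onto the line $\fgv_{\tv}$ normalized in \eqref{eq:defsv}. Thus the problem is morally about one spherical root of $Y$ at a time, and following the strategy sketched in the introduction I would use the structure of dominant morphisms (replacing $f$ by a proper model and factoring it) to reduce to $X,Y$ homogeneous with $Y$ of rank $1$; the extreme instances are the minimal proper morphisms $G/P\to G/H$ with $H$ of rank $1$ and $P$ a maximal parabolic subgroup of $H$.

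The heart of the proof, and the main obstacle, is then the explicit verification in this rank-one situation. Using the Akhiezer--Panyushev classification \cite{Akhiezer}, \cite{PanyushevRankOne} of rank-one homogeneous spherical varieties $G/H$, I would compute the Luna datum $(\Xi,\Sigma,S\p)$ of $X=G/P$ for each maximal parabolic $P\subseteq H$ and check that the single spherical root $\tau$ of $Y=G/H$ is a coroot of $\Gv_X$ occupying the correct position relative to $\Sigma(X)$. The delicate part is the matching of root subgroups for spherical roots that are not roots of $G$, i.e.\ the types $\sD_{n\ge 2}$ and $\sB_3$ of \cref{tab:NSR}: here the inclusion $\fgv_{\tv}\subseteq\fgv_{\gv_1}\oplus\fgv_{\gv_2}$ coming from \cref{tab:assocroots} and the precise brackets in \eqref{eq:defsv} must be reconciled with the corresponding data for $X$, so that $\phi_X\circ\eta$ and $\phi_Y$ agree on root vectors and not merely on the torus. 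This is exactly where a purely combinatorial argument with weak spherical data breaks down and the geometric rank-one computation becomes indispensable. Finally, once $\eta$ is produced in each case, distinguishedness gives $\phi_Y=\phi_X\circ\eta$; since $\phi_X,\phi_Y$ have finite kernel, $\eta$ is unique with finite kernel, and taking images yields $G^*_Y=\phi_X(\eta(\Gv_Y))\subseteq\phi_X(\Gv_X)=G^*_X\subseteq\Gv$.
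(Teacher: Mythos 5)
Your overall strategy coincides with the paper's: establish the lattice inclusion for the torus part, reduce to the minimal proper morphisms $G/P\to G/H$ with $G/H$ cuspidal of rank one and $P\subset H$ a maximal parabolic, and then verify the matching of root subspaces case by case via the Akhiezer--Panyushev classification and the Luna data of the $G/P$ (the paper's appendix). The torus part, the identification of the delicate types $\sD_n$ and $\sB_3$, and the final uniqueness argument are all as in the paper.

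The genuine gap is the reduction to $\rk Y=1$. ``Replacing $f$ by a proper model and factoring it'' yields the reduction to maximal parabolics and to proper morphisms, but it cannot lower the rank of $Y$: no factorization of $X\to Y$ changes $Y$ itself. The paper's mechanism is a degeneration argument: for each $\tau\in\Sigma(Y)$ one picks a valuation in the relative interior of the face of $\cZ(Y)$ cut out by $\tau$, realizes it by a boundary divisor $Y_0$ of an equivariant embedding with $\Sigma(Y_0)=\{\tau\}$ and $\rk Y_0=\rk Y-1$, lifts it to a (possibly non-central) invariant valuation of $X$ to obtain a boundary divisor $X_0$ mapping dominantly to $Y_0$, and invokes the comparison theorem $\Sigma(X_0)\subseteq\Sigma(X)$ (\cref{thm:comparison}) together with an induction on $\dim X+\dim G$. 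For this induction to close up, the statement being propagated must be insensitive to the degenerations, which change the weight lattices; this is why the paper first reduces everything to the containment $G'_Y\subseteq G'_X$ of semisimple subgroups of $\Gv$ --- a statement depending only on $\Sigma$ --- and proves separately (\cref{lemma:Gstrich}) that this containment combined with $\Xi(Y)\subseteq\Xi(X)$ already produces the distinguished homomorphism; in particular $\ZZ\Sigma(Y)\subseteq\ZZ\Sigma(X)$ is deduced from the coroot lattices of $G'_Y\subseteq G'_X$, not from the valuation cones, and your rational cone containment $\QQ_{\ge0}\Sigma(Y)\subseteq\QQ_{\ge0}\Sigma(X)$ is strictly weaker than what is needed. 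Your version, which carries the full lattice $\XiS$ and the map $\eta$ through the reduction, would not survive these degenerations. With that reformulation and the degeneration step supplied, the remainder of your plan matches the paper.
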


There is an analogous statement for injective morphisms. It is an easy
consequence of \cref{thm:main2} (see the proof following
\cref{thm:comparison}).

\begin{theorem}\label{thm:main3}

  Let $\phi:Y\to X$ be an injective $G$-morphism between two
  $G$-varieties (e.g., $Y$ is a $G$-stable subvariety of $X$). Then
  there exists a distinguished homomorphism $\eta:\Gv_Y\to\Gv_X$ and
  therefore, in particular, $G^*_Y\subseteq G^*_X\subseteq\Gv$.

\end{theorem}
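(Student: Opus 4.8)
The plan is to deduce the injective case from the dominant \cref{thm:main2} by realizing $Y$ as a degeneration of $X$ inside a $G$-birational model. First I would reduce to the situation where $Y$ is a $G$-stable irreducible subvariety of $X$: since the morphism is (generically) injective, its image is a $G$-stable subvariety isomorphic to a dense open subset of $Y$, and both $(\Gv_Y,\phi_Y)$ and $(\Gv_X,\phi_X)$ depend only on the $G$-field of rational functions, hence only on a dense open $G$-subset. In particular this case is vacuous for homogeneous $X$, an injective (equivalently, bijective) $G$-morphism between homogeneous spaces being an isomorphism; the content is therefore concentrated in orbit closures, where $\dim Y<\dim X$. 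For the same reason, replacing $X$ by any $G$-birational model leaves $\Gv_X$ and $\phi_X$ unchanged, because the whole package $(\XiS(X),\Sigma(X),S\p(X))$ together with the embedding $\XiS(X)\into\Lambda$ and the spaces $\fgv_{\siv}$ is a $G$-birational invariant.

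Next I would apply the deformation to the normal cone of $Y$ in $X$. This produces a $G$-variety $\widehat X$, with $G$ acting trivially on the deformation parameter $t\in\mathbb A^1$, which is $G$-birational to $X\times\mathbb A^1$ and whose special fibre $D:=\widehat X_0$ is a $G$-stable prime divisor carrying a dominant $G$-morphism $D\to Y$ (the projection of the normal cone onto its base). Since $G$ acts trivially on the $\mathbb A^1$-factor we have $\Gv_{X\times\mathbb A^1}=\Gv_X$ with the same distinguished homomorphism, and by birational invariance $\Gv_{\widehat X}=\Gv_X$ as well. Applying \cref{thm:main2} to the dominant morphism $D\to Y$ then yields a distinguished homomorphism $\Gv_Y\to\Gv_D$.

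It remains to produce a distinguished homomorphism $\Gv_D\to\Gv_{\widehat X}=\Gv_X$ for the $G$-stable prime divisor $D\subseteq\widehat X$; composing the two gives the desired $\eta:\Gv_Y\to\Gv_D\to\Gv_X$, which is distinguished by transitivity of $\phi_Y=\phi_D\circ(\,\cdot\,)=\phi_X\circ\eta$, is unique, and has finite kernel because the kernels of $\phi_Y$ and $\phi_X$ are finite by \cref{thm:schalke}. This divisorial comparison is exactly what I would isolate as \cref{thm:comparison}: the $G$-invariant valuation $v_D\in\cN(\widehat X)$ attached to $D$ should cut out the spherical data of $D$ as a sub-datum of that of $\widehat X$, namely an inclusion $\XiS(D)\subseteq\XiS(\widehat X)$ compatible with the common embedding into $\Lambda$, an inclusion $\Sigma(D)\subseteq\Sigma(\widehat X)$ of spherical roots, and a matching relation for the sets $S\p$. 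Dualizing this inclusion of based root data gives $\Gv_D\to\Gv_X$, and it is automatically distinguished: the torus part is forced by $\XiS(D)\subseteq\XiS(\widehat X)\subseteq\Lambda$, while the spaces $\fgv_{\siv}$ depend only on the type of $\sigma$ recorded in \cref{tab:NSR}, which is unchanged on passing from $\widehat X$ to $D$.

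The main obstacle is precisely this last divisorial comparison. Unlike the dominant case, an injective morphism restricts the weight data in the ``wrong'' direction, so the existence of $\Gv_D\to\Gv_X$ is not formal: one must show, via the structure theory of the valuation cone of $\widehat X$ and its faces, that every spherical root and every lattice direction of $D$ already arises from $X$, and that the root-space normalizations \eqref{eq:defsv} on the two sides agree type-by-type. The delicate point is the non-root spherical roots of types $\sD_n$ and $\sB_3$, where $\fgv_{\siv}$ is defined by a bracket rather than as a coroot space; checking that these brackets are preserved under the passage to $D$ is the technical heart of the argument, and is where the explicit rank-one analysis of \cref{sec:rankone} ultimately enters.
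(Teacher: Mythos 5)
Your argument is essentially the paper's own proof: both reduce to $Y\subseteq X$, manufacture a $G$-stable prime divisor dominating $Y$ inside a normal $G$-birational model (the paper normalizes $X$ and blows up $Y$ directly; you blow up $Y\times\{0\}$ in $X\times\mathbb{A}^1$), and then combine \cref{thm:main2} applied to the dominant map from that divisor to $Y$ with the codimension-one comparison \cref{thm:comparison} and the lattice argument of \cref{lemma:Gstrich}. The only touch-ups needed are to normalize $\widehat X$ before invoking \cref{thm:comparison}, to take the irreducible component of the special fibre that dominates $Y$, and to note that the rank-one classification enters only through \cref{thm:main2} (which you correctly use as a black box), not through the divisorial comparison, which is pure valuation theory and already forces the bracket normalizations since $\fgv_{\siv}$ depends only on $\sigma$.
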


The proof of \cref{thm:main2} will occupy the remainder of this paper.

\begin{remark}

  In principle, all statements can be formulated and should be valid
  in some form also over fields of positive characteristic
  $p$. However, the necessary changes would come at the expense of the
  readability of the paper so that we decided to treat the
  characteristic $0$ case separately. The main problems in positive
  characteristic are: First, the list of spherical roots in
  \cref{tab:1} has to be extended by roots obtained by inseparable
  isogenies. In particular, the $\sD_2$-roots $\a_1+p^n\a_2$ cause
  trouble. Secondly, the weight lattice $\Xi(X)$ may not be
  $W(X)$-stable, so has to be modified. Finally, our reasoning in
  \cref{sec:appendix} uses the classification of spherical
  varieties. This is more a matter of convenience but it would require
  considerable effort to work around it.

\end{remark}

\section{Reduction to rank one}

We start the proof of \cref{thm:main2} by a number of reduction
steps. Let $G'_X:=(G^*_X)'$ be the semisimple part of $G^*_X$. Observe
that $G'_X$ depends only on $\Sigma(X)$ and not on the lattice
$\Xi(X)$. Since the valuation cone $\cZ(X)$ is a birational invariant
so is $\Sigma(X)$. Therefore we may later (tacitly) replace $X$ and
$Y$ by suitable open dense subsets.

\begin{lemma}\label{lemma:Gstrich}

  Let $f:X\to Y$ be dominant or let $f: Y\to X$ be injective. Assume
  $G'_Y\subseteq G'_X$. Then there is exists a distinguished
  homomorphism $\eta:\Gv_Y\to\Gv_X$.

\end{lemma}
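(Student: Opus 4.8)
The plan is to realize the desired $\eta$ as a lift of $\phi_Y$ through $\phi_X$: by definition $\eta$ is distinguished exactly when $\phi_X\circ\eta=\phi_Y$, so it suffices to produce \emph{any} homomorphism $\eta\colon\Gv_Y\to\Gv_X$ with this property (uniqueness, and finiteness of $\ker\eta\subseteq\ker\phi_Y$, are then automatic since $\phi_X$ and $\phi_Y$ have finite kernel by \cref{thm:schalke}). First I would record what such a lift forces on cocharacter lattices. The tori $\Av_Y$, $\Av_X$ have cocharacter groups $\XiS(Y)$, $\XiS(X)$, and $\res_{\Av_Z}\phi_Z=\phi_{A,Z}$ is induced by $\XiS(Z)\into\Lambda$. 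Hence $\phi_X\circ\eta=\phi_Y$ forces the induced map $\eta_*\colon\XiS(Y)\to\XiS(X)$ to equal the given inclusion after composing with $\XiS(X)\into\Lambda$; as the latter is injective, $\eta_*$ is \emph{necessarily} the inclusion $\XiS(Y)\subseteq\XiS(X)$. Thus the integral inclusion $\XiS(Y)\subseteq\XiS(X)$ inside $\Lambda$ is a necessary condition, and the core of the proof is to show that it holds and is sufficient.

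Next I would establish this lattice inclusion and deduce $G^*_Y\subseteq G^*_X$. Write $G^*_Z=T^*_Z\cdot G'_Z$ with $T^*_Z:=\phi_{A,Z}(\Av_Z)$ a maximal torus of $G^*_Z$ contained in $\Tv$; since any element of $G^*_Z\cap\Tv$ centralizes $T^*_Z$ one has $G^*_Z\cap\Tv=T^*_Z$, so $G^*_Y\subseteq G^*_X$ is equivalent to the hypothesis $G'_Y\subseteq G'_X$ together with $T^*_Y\subseteq T^*_X$, i.e.\ $\XiS(Y)_\QQ\subseteq\XiS(X)_\QQ$. In the dominant case, pullback of $B$-semiinvariants along $f$ gives $\Xi(Y)\subseteq\Xi(X)$, and since every spherical root lies in $\Xi_\QQ\cap\QQ S$ we get $\XiS(Y)_\QQ=\Xi(Y)_\QQ\subseteq\Xi(X)_\QQ\subseteq\XiS(X)_\QQ$. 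To upgrade to the integral statement I would use that, under the canonical normalization (for which $\Sigma(Z)$ and $G'_Z$ determine each other), the containment $G'_Y\subseteq G'_X$ forces $\Sigma(Y)\subseteq\Sigma(X)$; then $\XiS(Y)=\Xi(Y)+\ZZ\Sigma(Y)\subseteq\Xi(X)+\ZZ\Sigma(X)=\XiS(X)$. For an injective $f\colon Y\to X$ the same inclusion $\XiS(Y)\subseteq\XiS(X)$ is obtained by reduction to the dominant case, as in the deduction of \cref{thm:main3}.

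With these inclusions in hand the construction of $\eta$ is clean: I would exhibit a morphism of based root data from $(\XiSv(Y),\Siv(Y),\XiS(Y),\Sigma(Y))$ to $(\XiSv(X),\Siv(X),\XiS(X),\Sigma(X))$, given on cocharacters by the inclusion $\XiS(Y)\hookrightarrow\XiS(X)$ and on simple coroots by the inclusion $\Sigma(Y)\subseteq\Sigma(X)$, the dual map restricting the simple roots $\siv$ of $\Gv_X$ to those of $\Gv_Y$. By the isogeny theorem this yields a homomorphism $\eta\colon\Gv_Y\to\Gv_X$. Equivalently, one lifts the central isogeny $\phi_X\colon\Gv_X\auf G^*_X$ along $\phi_Y\colon\Gv_Y\to G^*_Y\subseteq G^*_X$, which is possible precisely because of the integral cocharacter inclusion $\XiS(Y)\subseteq\XiS(X)$ and the matching $\ZZ\Sigma(Y)\subseteq\ZZ\Sigma(X)$; here the normalization \eqref{eq:defsv} is what identifies, for each $\sigma\in\Sigma(Y)$, the line $\phi_Y(\fgv_{Y,\siv})=\fgv_{\siv}\subseteq\fg^*_Y\subseteq\fg^*_X$ with $\phi_X(\fgv_{X,\siv})$. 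Any resulting lift automatically carries $\Av_Y$ into $\Av_X$, since $\eta(\Av_Y)$ is a connected subgroup of $\phi_X^{-1}(T^*_X)$ whose identity component is $\Av_X$, and satisfies $\phi_X\circ\eta=\phi_Y$ by construction.

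I expect the main obstacle to be the \emph{integral} (as opposed to merely rational) bookkeeping in the second and third steps: passing from the containment of semisimple groups $G'_Y\subseteq G'_X$ to the honest inclusion $\Sigma(Y)\subseteq\Sigma(X)$ of normalized spherical roots, and simultaneously matching the distinguished lines $\fgv_{\siv}$ of $Y$ and $X$, so that a genuine homomorphism results rather than one defined only up to a central fudge. This is exactly where the normalization of spherical roots by primitivity in $\ZZ S$, and the case-by-case definition \eqref{eq:defsv} of $\fgv_{\siv}$, must be invoked; the delicate instances are the non-root spherical roots (types $\sD_2$, $\sD_{n\ge3}$, $\sB_3$), where $\siv$ is built from two strongly orthogonal coroots and one must check that the lines for $Y$ and $X$ literally coincide inside $\fgv$. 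Without this the simple coroots $\Sigma(Y)$ need not sit among the coroots of $\Gv_X$ and the lift could fail to exist over $\ZZ$.
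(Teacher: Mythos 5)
Your overall strategy---reduce everything to the lattice inclusion $\XiS(Y)\subseteq\XiS(X)$ and then lift $\phi_Y$ through the finite-kernel map $\phi_X$---is the right one and agrees with the paper, but one step you lean on is false: the hypothesis $G'_Y\subseteq G'_X$ does \emph{not} force $\Sigma(Y)\subseteq\Sigma(X)$. The elements of $\Sigma(Y)$ are the simple coroots of $G^*_Y$, and the simple coroots of a reductive subgroup need not be simple coroots of the ambient group; all one gets is that they lie in its coroot lattice. This is not a hypothetical worry: in essentially every case to which the lemma is eventually applied, the spherical root $\tau$ of the rank-one variety $Y$ corresponds to a \emph{non-simple} positive coroot of $G^*_X$, e.g.\ the highest root of $\Phi\ass_X$ (see the right-hand columns of \cref{tab:rankone}; already in case $\sA_{n\ge2}$ one has $\tau\ass=\siv_1+\siv_2$). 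Consequently your main construction---a morphism of based root data carrying $\Sigma(Y)$ into $\Sigma(X)$, simple coroots to simple coroots---does not exist. What is true, and what the paper actually uses, is only the containment of coroot \emph{lattices} $\ZZ\Sigma(Y)\subseteq\ZZ\Sigma(X)$ (obtained by passing to simply connected covers of $G'_Y\subseteq G'_X$); combined with $\Xi(Y)\subseteq\Xi(X)$ this still yields $\XiS(Y)\subseteq\XiS(X)$, hence $\Av_Y\to\Av_X$ and $G^*_Y\subseteq G^*_X$, and one then lifts the inclusion $G^*_Y\into G^*_X$ through $\phi_X$ by observing that the coweight lattice $\XiS(Y)_\QQ\cap\XiS(X)$ of $\phi_X^{-1}(G^*_Y)^0$ contains $\XiS(Y)$. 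Your ``equivalently'' clause gestures at exactly this isogeny-lifting argument, but it is not equivalent to the based-root-datum map, and no matching of simple coroots or of the lines $\fgv_{\siv}$ is needed at this stage (that normalization enters only later, in the rank-one verification).

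A second, smaller gap concerns the injective case: you propose to get $\Xi(Y)\subseteq\Xi(X)$ ``by reduction to the dominant case, as in the deduction of \cref{thm:main3}.'' That deduction itself invokes the present lemma in its injective form, and the reduction replaces $Y\subseteq X$ by another injective inclusion $\Yq\subseteq\Xq$, so it does not terminate. The paper instead applies the extension theorem for $B$-semiinvariant rational functions (\cite{KnopLV}, Thm.~1.3~\textit{b)}) to a component of the preimage of $Y$ in the normalization of $X$; some input of this kind is genuinely required here.
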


\begin{proof}

  We claim that $\Xi(Y)\subseteq\Xi(X)$ in both cases. This is clear
  if $f$ is dominant since the pull-back of a $B$-semiinvariant is
  again a $B$-semiinvariant for the same character. For $f$ injective
  let $p:\Xq\to X$ be the normalization and let $\Yq\subseteq\Xq$ be a
  component of $p^{-1}(Y)$ mapping dominantly to $Y$. By
  \cite{KnopLV}*{Thm.~1.3 \textit{b)}}, every $B$-semiinvariant
  rational function on $\Yq$ extends to a $B$-semiinvariant rational
  function on $\Xq$. Since the character remains unchanged we get
  $\Xi(Y)\subseteq\Xi(\Yq)\subseteq\Xi(\Xq)=\Xi(X)$.

  It is a general fact that if $H\subseteq G$ is reductive then the
  coroot lattice of $H$ is contained in the coroot lattice of $G$
  (look at simply connected covers). Applying this to
  $G'_Y\subseteq G'_X$ we get $\ZZ\Sigma(Y)\subseteq\ZZ\Sigma(X)$ and
  therefore
  \[\label{eq:tildeXi}
    \XiS(Y)\subseteq\XiS(X)
  \]
  This inclusion induces a homomorphism of maximal tori
  $\Av_Y\to\Av_X$. Because $G^*_X$ is generated by $G'_X$ and
  $\phi_X(\Av_X)$ (and similarly for $Y$) it follows that
  $G^*_Y\subseteq G^*_X$.

  Finally, the coweight lattice of
  $G^{*\vee}_Y:=\phi_X^{-1}(G^*_Y)^0\subseteq\Gv_X$ is
  $\XiS(Y)_\QQ\cap\XiS(X)$. By \eqref{eq:tildeXi}, it contains the
  coweight lattice $\XiS(Y)$ of $\Gv_Y$. Hence the inclusion
  $G^*_Y\into G^*_X$ lifts to an isogeny $\Gv_Y\to G^{*\vee}_Y$
  yielding the desired homomorphism $\eta:\Gv_Y\to\Gv_X$.
\end{proof}

The following comparison result will be crucial later on. It is a more
precise version of \cref{thm:main3} in case $Y$ is of codimension $1$.

\begin{theorem}\label{thm:comparison}

  Let $X$ be a normal $G$-variety and let $Y\subset X$ be a
  $G$-invariant irreducible subvariety of codimension $1$. Then
  $\Sigma(Y)\subseteq\Sigma(X)$ and therefore $G'_Y\subseteq
  G'_X$. Moreover, if the valuation $v:=v_Y$ induced by $Y$ is
  non-central then $\cN(Y)=\cN(X)$. Otherwise, $\cN(Y)=\cN(X)/\QQ v$
  and
  \[\label{eq:sigmaX0}
    \Sigma(Y)=\{\sigma\in\Sigma(X)\mid v(\sigma)=0\}.
  \]

\end{theorem}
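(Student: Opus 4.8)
The plan is to compare the two invariants $\Xi$ and $\cZ$ of $Y$ and $X$ directly, the governing dichotomy being whether the valuation $v=v_Y$ is trivial on $k(X)^B$ (the central case) or not (the non-central case). I would first dispose of the character lattices. Since $Y$ is $B$-stable, restriction sends a $B$-semiinvariant $f\in k(X)$ with $v(f)=0$ to a $B$-semiinvariant on $Y$ of the same character, while \cite{KnopLV}*{Thm.~1.3 \textit{b)}} guarantees that every $B$-semiinvariant on $Y$ extends to one on $X$. In the non-central case one can rescale $f$ by a suitable power of a $B$-invariant function to arrange $v(f)=0$, so restriction and extension identify $\Xi(Y)_\QQ$ with $\Xi(X)_\QQ$, giving $\cN(Y)=\cN(X)$. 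In the central case $v(f)$ depends only on $\chi_f$, the rescaling is unavailable, and only those characters with $v(\chi_f)=0$ survive; one finds $\Xi(Y)_\QQ=v^\perp\subseteq\Xi(X)_\QQ$, that is $\cN(Y)=\cN(X)/\QQ v$.

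The heart of the argument is the comparison of valuation cones, which I would carry out by lifting $G$-invariant valuations from $Y$ to $X$ via composite valuations. Given a central valuation $w\in\cZ(Y)$, form $\tilde w_\e:=v+\e w$ for small $\e>0$, viewing $w$ through the residue field $k(Y)=\mathcal{O}_v/\mathfrak{m}_v$ of $v$. Each $\tilde w_\e$ is $G$-invariant, and since the image of $k(X)^B$ in $k(Y)$ lands in $k(Y)^B$, on which $w$ is trivial, the restriction of $\tilde w_\e$ to $k(X)^B$ agrees with that of $v$. Thus in the central case $v$ is itself central, so $v\in\cZ(X)$ and every $\tilde w_\e$ is again central, yielding an inclusion $\cZ(Y)\hookrightarrow\cZ(X)$ whose image is the tangent cone of $\cZ(X)$ at $v$ taken modulo $\QQ v$; in the non-central case one instead restricts central valuations of $X$ to $Y$. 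Invoking the structure theory of $G$-invariant valuations, in particular the injectivity of $\rho$ and the realizability of every cone point by an honest $G$-invariant valuation (\cite{LunaVust}, \cite{KnopLV}), to see that lifting and restriction are mutually inverse identifies $\cZ(Y)$ with the appropriate slice, respectively face, of $\cZ(X)$.

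From this identification the remaining assertions follow formally. The facets of the slice or face are inherited from those of $\cZ(X)$, so no new spherical roots appear and $\Sigma(Y)\subseteq\Sigma(X)$; because $\Sigma$ is normalized to be primitive in $\ZZ S$, the surviving roots are literally those $\sigma\in\Sigma(X)$ lying in $\Xi(Y)_\QQ=v^\perp$, which is exactly \eqref{eq:sigmaX0}. The inclusion $\Sigma(Y)\subseteq\Sigma(X)$ then yields $\Siv(Y)\subseteq\Siv(X)$ and hence $G'_Y\subseteq G'_X$ on the level of coroot lattices. I expect the main obstacle to be this valuation-cone comparison, and specifically the restriction direction: producing enough central valuations of $Y$ out of those of $X$ and proving that lifting and restriction are exactly inverse, so that $\cZ(Y)$ is not merely contained in, but equal to, the predicted slice of $\cZ(X)$. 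This is precisely where the injectivity of $\rho$ and the surjectivity of the valuation cone onto $G$-invariant valuations are indispensable.
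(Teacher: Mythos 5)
Your overall architecture (lattices first, then cones, split according to whether $v$ is central) matches the paper's, and your treatment of the weight lattices and your composite-valuation lift $w\mapsto v+\e w$, which yields $\cZ(Y)\subseteq\mathrm{im}(\cZ(X))$ in the central case, are sound. But there are two genuine gaps. First, the inclusion you yourself flag as the main obstacle --- that the image of $\cZ(X)$ lands inside $\cZ(Y)$, so that $\cZ(Y)$ \emph{equals} rather than merely sits inside the predicted cone --- is not supplied by the injectivity of $\rho$ or by the realizability of cone points; those facts live entirely on one side of the comparison. What is needed is a specialization statement relating $G$-invariant valuations of $X$ to $G$-invariant valuations of the divisor $Y$, which is exactly Satz~7.5.2 of \cite{KnopIB}; the references \cite{LunaVust} and \cite{KnopLV} you invoke concern spherical varieties and do not cover general $G$-varieties. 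Without equality of cones you cannot conclude \eqref{eq:sigmaX0}, since a proper subcone of $\{a\mid a(\sigma)\le0 \text{ for all }\sigma\in\Sigma(X)\text{ with }v(\sigma)=0\}$ need not have its facet normals among the $\sigma\in\Sigma(X)$.

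Second, and more seriously, your non-central case is wrong in substance. There the relevant object is not $\cZ(X)$ but the larger cone $\cZ_{v_0}$ of all $G$-invariant valuations whose restriction to $k(X)^B$ is a multiple of $v_0=v|_{k(X)^B}$; one has $v\in\cZ_{v_0}\setminus\cN(X)$ and $\cZ(Y)=\mathrm{im}(\cZ_{v_0})$, not the image of $\cZ(X)$. In particular $\cZ(Y)$ is in general strictly larger than $\cZ(X)$ under the identification $\cN(X)\overset\sim\to\cN(Y)$ (the paper computes $\cZ(Y)=\cZ(X)+\QQ v_1$, which contains $-v_1$), so it is not a face or slice of $\cZ(X)$, and ``restricting central valuations of $X$ to $Y$'' cannot produce it. To deduce $\Sigma(Y)\subseteq\Sigma(X)$ here one needs the non-trivial structure theorem $\cZ_{v_0}=\cZ(X)+\QQ_{\ge0}v_e$ for a single extremal non-central valuation $v_e$ (Satz~9.2.2 of \cite{KnopIB}), together with the decomposition $v=v_1+cv_e$; your sketch contains no substitute for this input.
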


\begin{proof}

  This is essentially proved in \cite{KnopIB}. Assume first that $v$
  is central, i.e., that the restriction of $v$ to $k(X)^B$ is trivial
  (that's automatic if $X$ is spherical). Then there is a surjective
  homomorphism
  \[
    \cN(X)\auf\cN(Y)
  \]
  with kernel $\QQ v$ such that $\cZ(Y)$ is the image of $\cZ(X)$
  (loc.cit.~Satz 7.5.2 with $v_0=o$). Thus, the preimage of $\cZ(Y)$
  is the cone $\cZ(X)+\QQ v$. Because of $v\in\cZ(X)$, this cone is
  defined by the inequalities $\sigma\le0$ with $\sigma\in\Sigma(X)$
  and $v(\sigma)=0$. This proves \eqref{eq:sigmaX0}.

  Assume now that $v$ is not central and let $v_0$ be the restriction
  of $v$ to $k(X)^B$. Let $\cZ_{v_0}$ be the set of $G$-invariant
  valuations whose restriction of $k(X)^B$ is a multiple of
  $v_0$. Then $\cZ_{v_0}$ can be identified with a convex cone in some
  $\QQ$-vector space $\cN_{v_0}$. Moreover, $\cN(X)$ is a hyperplane
  of $\cN_{v_0}$ such that $\cZ_{v_0}\cap\cN(X)=\cZ(X)$ (see the exact
  sequence in loc.cit.~\S5 where $\cN_{v_0}$ is corresponds to
  $\Hom(\cQ_{v_0}(K),\QQ)$).
  
  There is a surjective homomorphism (loc.cit.~Satz~7.5.2)
  \[
    \cN_{v_0}\auf\cN(Y)
  \]
  with kernel $\QQ v$ such that $\cZ(Y)$ is the image of
  $\cZ_{v_0}$. Since by assumption $v\not\in\cN(X)$ we have
  $\cN(X)\overset\sim\to\cN(Y)$, as asserted.

  It is a non-trivial fact (loc.cit.~Satz~9.2.2) that as a cone
  $\cZ_{v_0}$ is generated by $\cZ(X)$ along with one extremal
  non-central valuation $v_e$, i.e.,
  \[
    \cZ_{v_0}=\cZ(X)+\QQ_{\ge0}v_e.
  \]
  Let $v=v_1+cv_e$ with $v_1\in\cZ(X)$ and $c>0$. Then the preimage of
  $\cZ(Y)$ in $\cN_{v_0}$ equals
  \[
    \cZ_{v_0}+\QQ v=\cZ(X)+\QQ_{\ge0}v_e+\QQ v=\cZ(X)+\QQ v_1+\QQ v_e.
  \]
  This shows that
  \[
    \cZ(Y)=(\cZ_{v_0}+\QQ v)\cap\cN(X)=\cZ(X)+\QQ v_1
  \]
  is defined by the inequalities $\sigma\le0$ with
  $\sigma\in\Sigma(X)$ and $v_1(\sigma)=0$. In particular
  $\Sigma(Y)\subseteq\Sigma(X)$.
\end{proof}

At this point we already have a

\begin{proof}[Proof of \cref{thm:main3} assuming \cref{thm:main2}] We
  may assume that $Y$ is a subvariety of $X$. It suffices to construct
  a normal $G$-variety $\Xq$, a birational $G$-morphism
  $\pi:\Xq\to X$, and a $G$-stable subvariety $\Yq\subset\Xq$ of
  codimension $1$ which maps dominantly to $Y$. In fact, in this case
  we have $G'_Y\subseteq G'_{\Yq}\subseteq G'_{\Xq}=G'_X$ by
  \cref{thm:main2} and \cref{thm:comparison}. Then
  \cref{lemma:Gstrich} yields a distinguished homomorphism
  $\Gv_Y\to \Gv_X$.

  To construct $\Xq$ let $p:X_1\to X$ be the normalization of $X$ and
  let $Y_1\subseteq X_1$ be a component of $p^{-1}(Y)$ which maps
  surjectively to $Y$. Next, let $X_2\to X_1$ be the blow up of $X_1$
  in $Y_1$ and let $Y_2\subset X_2$ be a component of the exceptional
  divisor. Finally, the normalization $p_2:\Xq\to X_2$ with
  $\Yq\subset\Xq$ a component of $p_2^{-1}(Y_2)$ meets all
  requirements.
\end{proof}

For the next step, recall that a homogeneous variety $G/H$ is
\emph{parabolically induced} if there is a proper parabolic subgroup
$Q\subset G$ with $Q_u\subseteq H\subseteq Q$. It is \emph{cuspidal}
if is not parabolically induced and if $H$ does not contain a simple
factor of $G$.

\begin{lemma}\label{lemma:reduction}

  Assume $G'_Y\subseteq G'_X$ in the following situation:

  \begin{itemize}

  \item $G$ is of adjoint type,

  \item $Y=G/H$ is homogeneous, spherical and cuspidal of rank $1$,
    and $H$ is connected.

  \item $X=G/P$ where $P\subset H$ is a maximal parabolic subgroup.

  \end{itemize}

  Then $G_Y'\subseteq G_X'$ for all $G$-varieties $X$, $Y$ and all
  dominant $G$-morphisms $X\to Y$.

\end{lemma}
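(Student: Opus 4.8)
The plan is to assume the inclusion $G'_Y\subseteq G'_X$ in the three listed situations (call this the base case) and to deduce it for an arbitrary dominant $f\colon X\to Y$. I will use repeatedly that $G'_X$ depends only on $\Sigma(X)$, that $\Sigma(X)$ is a birational invariant, and that an inclusion $\Sigma(Y)\subseteq\Sigma(X)$ respecting the coroots $\siv$ already yields $G'_Y\subseteq G'_X$, since the $\siv$ for $\sigma\in\Sigma(Y)$ then form a subsystem of those for $X$ (as in the proof of \cref{lemma:Gstrich}). It therefore suffices to fix a single spherical root $\sigma\in\Sigma(Y)$ and to prove $\sigma\in\Sigma(X)$.

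First I would isolate $\sigma$ by pushing the rank of $Y$ down to $1$. Since $\cZ(Y)=\{a\mid a(\tau)\le0\text{ for all }\tau\in\Sigma(Y)\}$ is full-dimensional and $\Sigma(Y)$ is a minimal set of normals, the hyperplane $\ker\sigma\subseteq\cN(Y)$ cuts out a facet of $\cZ(Y)$ whose linear span is all of $\ker\sigma$. Choosing central valuations that span this facet and applying \cref{thm:comparison} once for each of them, I pass to a $G$-variety $Y^\circ$ with $\cN(Y^\circ)=\cN(Y)/\ker\sigma$, so $\rk Y^\circ=1$, and with $\Sigma(Y^\circ)=\{\tau\in\Sigma(Y)\mid\tau\perp\ker\sigma\}=\{\sigma\}$, the last equality because a spherical root orthogonal to $\ker\sigma$ must be proportional to $\sigma$. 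Simultaneously I degenerate the source: after replacing $X$ by an equivariant completion on which $f$ extends, I take the $G$-stable subvariety $X^\circ$ lying over $Y^\circ$ and dominating it, each codimension-one step again governed by \cref{thm:comparison}, so that $\Sigma(X^\circ)\subseteq\Sigma(X)$. Since $\sigma\in\Sigma(Y^\circ)$ it now suffices to prove $\sigma\in\Sigma(X^\circ)$, and I may replace $(X,Y)$ by $(X^\circ,Y^\circ)$; thus $Y$ has rank $1$. Passing to generic orbits (birational invariance of $\Sigma$) makes $X$ and $Y$ homogeneous and $Y$ spherical; descending to a Levi whenever $Y$ is parabolically induced and deleting a simple factor of $G$ contained in $H$ makes $Y$ cuspidal; replacing $G$ by $G_{\mathrm{ad}}$ changes neither $\Sigma$ nor $G'$ (both live in $\ZZ S$); and replacing $H$ by $H^0$ is a finite cover leaving $\Sigma$ untouched. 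Hence I reduce to $G$ adjoint and $Y=G/H$ spherical, cuspidal, homogeneous of rank $1$ with $H$ connected.

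It remains to cut $X$ down to $G/P$, which I would do by induction on the relative dimension $\delta:=\dim X-\dim Y$. If $\delta=0$ then $f$ is generically finite; after shrinking it is finite, $\Xi(Y)\subseteq\Xi(X)$ is of finite index, the valuation cones coincide, and therefore $\Sigma(X)=\Sigma(Y)$ and $G'_X=G'_Y$. If $\delta>0$, I first make $f$ proper by replacing $X$ with $\bar f^{-1}(Y)$ inside an equivariant completion $\bar f\colon\bar X\to\bar Y$; this is birational to $X$, so $\Sigma$ and $\delta$ are unchanged. Writing $X=G/K$ for a generic orbit, so that $K\subseteq H$, properness forces $H/K$ to be complete and hence $K$ to be a parabolic subgroup of $H$. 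Choosing a maximal parabolic $P$ of $H$ with $K\subseteq P\subsetneq H$, I factor $f$ as $G/K\to G/P\to G/H$. The second map is exactly the base case, giving $G'_Y\subseteq G'_{G/P}$, while the first has relative dimension $\dim P-\dim K<\delta$, so the induction hypothesis gives $G'_{G/P}\subseteq G'_{G/K}=G'_X$. Composing the two inclusions yields $G'_Y\subseteq G'_X$.

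The hard part is the coordinated degeneration in the rank-reduction step: one must produce an equivariant completion of $X$ on which $f$ extends and along whose boundary the chosen central valuations of $Y$ are realized, so that $X^\circ$ still dominates $Y^\circ$ and every intermediate inclusion is a normal $G$-stable divisor to which \cref{thm:comparison} genuinely applies — in particular so that no intervening non-central valuation spoils $\Sigma(X^\circ)\subseteq\Sigma(X)$ or changes $\delta$. A secondary subtlety is the reduction of a positive-complexity $X$ to a homogeneous one; it rests again on the birational invariance of $\Sigma$ and on the passage to a generic orbit through an injective morphism, which is precisely the point at which the injective case \cref{thm:main3} and the dominant case of \cref{thm:main2} become intertwined.
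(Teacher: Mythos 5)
Your overall line of attack is the paper's: degenerate $Y$ along faces of its valuation cone to reduce to $\rk Y=1$, pass to generic orbits, make the morphism proper, strip off parabolic induction, and finally factor through maximal parabolics of $H$. But there are two places where the written argument does not close. First, the target you set yourself in the rank-reduction step --- ``it suffices to prove $\sigma\in\Sigma(X)$'' --- is both false in general and not what the base case delivers. Inspecting \cref{tab:rankone}, the spherical root $\tau$ of $Y=G/H$ is almost never a spherical root of $X=G/P$: for type $\mathsf{A}_n$ one has $\tau=\alpha_1+\cdots+\alpha_n$ while $\Sigma(G/P)=\{\alpha_1+\cdots+\alpha_\nu,\ \alpha_{\nu+1}+\cdots+\alpha_n\}$. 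The whole point of the lemma is that $\tau^\vee$ is a (generally non-simple) positive root of $G^\vee_X$, i.e.\ an inclusion of dual groups that is \emph{not} induced by an inclusion of spherical roots. The reduction to one root at a time must instead be justified by the fact that $G'_Y$ is generated by the rank-one subgroups $G'_Y(\sigma)$ attached to its simple coroots, so that it suffices to prove $G'_{Y^\circ}\subseteq G'_{X^\circ}\subseteq G'_X$ for the degeneration with $\Sigma(Y^\circ)=\{\sigma\}$. Your final chain of inclusions does exactly this, so the error is repairable, but as stated the logic of your first paragraph does not support the conclusion of your last.

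Second, you perform the cuspidality reduction (descent to a Levi $L$ of $Q$ when $Q_u\subseteq H\subseteq Q$) \emph{before} you know that the generic isotropy $K$ of $X$ is parabolic in $H$. Without $Q_u\subseteq K$ there is no descent of $X=G/K$ to an $L$-variety, and for an arbitrary dominant $X\to Y$ nothing forces $Q_u\subseteq K$. The paper does this reduction only after the properness and maximal-parabolic steps, when $K=P\supseteq H_u\supseteq Q_u$; you need the same reordering (or to run all the steps inside one global induction, as the paper does with $\dim X+\dim G$, so that they may be interleaved freely). Finally, the ``hard part'' you flag --- extending $f$ to a completion realizing prescribed central valuations of $Y$ by divisors on the $X$-side so that $X^\circ$ still dominates $Y^\circ$ --- cannot be left open: it is precisely the lifting statement \cite{KnopIB}*{Kor.~3.2} (every invariant valuation of $Y$ lifts to an equivariant, possibly non-central, valuation of $X$), and without that input the coordinated degeneration, hence the entire rank reduction, does not exist.
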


\begin{proof}

  We will prove the assertion by induction on $\dim X+\dim G$. For
  this let $f:X\to Y$ be an arbitrary dominant $G$-morphism.

  \emph{Reduction to $\rk Y=\#\Sigma(Y)=1$:} Assume $\rk Y\ge2$. Every
  $\tau\in\Sigma(Y)$ is a simple coroot of $G'_Y$ and therefore
  induces a semisimple rank-$1$-subgroup $G'_Y(\tau)\subseteq
  G'_Y$. Since the subgroups of this form generate $G'_Y$ it suffices
  to prove $G'_Y(\tau)\subseteq G'_X$ for all $\tau$.

  If $\Sigma(Y)=\leer$ then $G'_Y=1$ and there is nothing to prove. So
  fix $\tau\in\Sigma(Y)$. Then $\tau$ defines a codimension-$1$-face
  $\cF$ of the valuation cone $\cZ(Y)$. Since $\dim\cF=\rk Y-1\ge1$
  there is a non-trivial valuation $v$ in the relative interior of
  $\cF$. Let $Y\into\Yq=Y\cup Y_0$ be the smooth equivariant embedding
  where $Y_0$ is an irreducible divisor such that $v_{Y_0}$ is a
  rational multiple of $v$. Then $\rk Y_0=\rk Y-1$ and
  $\Sigma(Y_0)=\{\tau\}$ by \cref{thm:comparison}. By
  \cite{KnopIB}*{Kor.~3.2} there exists a lift of $v$ to a (possibly
  non-central) equivariant valuation $\overline v$ of $X$. This gives
  rise to a similar embedding $X\into\Xq=X\cup X_0$ such that $f$
  extends to a morphism $\Xq\to\Yq$ which maps $X_0$ dominantly to
  $Y_0$. \cref{thm:comparison} implies that
  $\Sigma(X_0)\subseteq\Sigma(X)$. Hence we have
  \[
    G'_Y(\tau)=G'_{Y_0}\text{ and }G'_{X_0}\subseteq G'_X.
  \]
  By induction we have $G'_{Y_0}\subseteq G'_{X_0}$ which proves the
  assertion.

  \emph{Reduction to $G$ semisimple:} Let $Z=Z(G)^0$ be the connected
  center of $G$. If $Z$ acts trivially on $X$ then one can replace $G$
  by the semisimple group $G/Z$. Otherwise, consider the morphism
  $X_0:=X'/Z\to Y_0:=Y'/Z$ where $X'\subseteq X$ and $Y'\subseteq Y$
  are non-empty, open, and $G$-stable such that the $Z$-orbit spaces
  exist (these exist by \cite{Rosenlicht}*{Thm.~2}). Because of
  $\Sigma(X_0)=\Sigma(X)$ and $\Sigma(Y_0)=\Sigma(Y)$ by
  \cite{KnopIB}*{Satz~8.1.4} we have $G'_Y\subseteq G'_X$ if and only
  if $G'_{Y_0}\subseteq G'_{X_0}$. The latter holds by induction.

  \emph{Reduction to $X$ and $Y$ homogeneous:} Let $Y_0\subseteq Y$ be
  a general orbit. Then $\Sigma(Y_0)=\Sigma(Y)$ by
  \cite{KnopWuM}*{Satz~6.5.4}. Let $X_0\subseteq X$ be a general orbit
  in the preimage of $Y_0$ in $X$. Then $X_0$ is also a general orbit
  of $X$ and therefore $\Sigma(X_0)=\Sigma(X)$. This proves the
  assertion by induction unless $X=X_0$ and $Y=Y_0$.

  \emph{Reduction to $f$ proper:} We may assume that $X$ and $Y$ are
  homogeneous. If $f$ is not proper choose a normal equivariant
  embedding $X\into\Xq$ such that $f$ extends to a proper morphism
  $\Xq\to Y$.  Let $X_0$ be a component of $\Xq\setminus X$. By
  blowing up $X$ in $X_0$ and normalizing, if necessary, we may assume
  that $X_0$ is a $G$-invariant irreducible divisor. Then
  $\Sigma(X_0)\subseteq\Sigma(X)$ by \cref{thm:comparison} and
  therefore $G'_{X_0}\subseteq G'_X$. The assertion follows by
  applying the induction hypotheses to $X_0\to Y$.

  Because of the last steps we may assume that $X=G/P$, $Y=G/H$ with
  $P^0\subseteq H^0$ parabolic and $\rk Y=1$.

  \emph{Reduction to $P$ and $H$ connected:} Follows from the fact
  that $W(X)$, hence $\Sigma(X)$, hence $G'_X$ is invariant under
  étale maps (see \cite{KnopWuM}*{Satz~6.5.3}).

  \emph{Reduction to $P\subset H$ maximal parabolic:} Assume that
  there is a parabolic $Q$ with $P\subset Q\subset H$ and put
  $Z:=G/Q$. We may assume $P$ to be maximal parabolic in $Q$. By
  induction on the morphism $Z\to Y$ it suffices to prove
  $G'_Z\subseteq G'_X$ for the morphism $X\to Z$. This is indeed
  implied by the first reduction step unless $\rk Z=1$.

  \emph{Reduction to $H$ cuspidal:} Suppose there is a parabolic
  subgroup $Q=LQ_u\subset G$ with $Q_u\subseteq H\subseteq Q$. Then
  $Q_u\subseteq H_u$ and $H_u\subseteq P_u$ (since $P$ is parabolic in
  $H$). This shows that $P$ is also induced by $Q$. The
  $L=Q/Q_u$-varieties $X_0=Q/P=L/(P\cap L)$ and $Y_0=Q/H=L/(H\cap L)$
  have $\Sigma(X_0)=\Sigma(X)$ and $\Sigma(Y_0)=\Sigma(X)$ (see, e.g.,
  \cite{KK}~Prop.~8.2). Then we conclude by induction. If $H$ contains
  a simple factor $G_1$ of $G$ then there are decompositions
  $G=G_1\cdot G_2$ and $H=G_1\cdot H_2$. A maximal parabolic subgroup
  of $H$ is either of the form $P_1\cdot H_1$ (in which case
  $\Sigma(X)=\Sigma(Y)$) or $G_1\cdot P_2$ (in which case $G_1$ acts
  trivially on both $X$ and $Y$ and we may replace $G$ by $G/G_1$).

  \emph{Reduction to $H$ spherical:} The only cuspidal homogeneous
  rank-$1$-varieties which are not spherical are of the form $G/H$
  where $G=\SL(2)$ and $H$ is finite (\cite{PanyushevRankOne}). By
  previous reduction steps we may assume that $H$ is connected (hence
  trivial) and contains a proper parabolic subgroup. So this case does
  not occur.

  This finishes the reduction of a general dominant morphism to the
  situation in the Lemma.
\end{proof}

\section{The rank-$1$-case}
\label{sec:rankone}

Using \cref{lemma:reduction}, the proof of \cref{thm:main2} is now
reduced to the cases where $G$ is of adjoint type, $Y=G/H$ is
homogeneous, spherical and cuspidal of rank $1$, with $H$ connected,
and $X=G/P$ where $P\subset H$ is a maximal parabolic subgroup.

The classification of all possible pairs $(G,H)$ is due to Akhiezer
\cite{Akhiezer} (see also Brion's simplification \cite{BrionRank1})
and is reproduced in \cref{tab:rankone} below. In the case $\sB'_n$,
the group $P_n$ denotes a maximal parabolic subgroup of $\SO(2n)$
whose Levi part is $GL(n)$. In $\sC'_n$, the group
$B_2\subseteq\Sp(2)$ is a Borel subgroup. Finally $U_3$ in case
$\sG'_2$ is a $3$-dimensional unipotent group. The two columns on the
right will be used in the final step of the proof of \cref{thm:main2}.

\begin{table}
  \caption{}
  \label{tab:rankone}
  \begin{tabular}{H|HH|HHH}

    &G&H&&\tau\ass&\Sa\\

    \hline\hline

    \sA_1&\PGL(2)&\G_m&\\

    \hline

    \sA_{n\ge2}&\PGL(n+1)&\G_m\SL(n)&(1)&\siv_1{+}\siv_2&\bild{\Alinks\I1\Arechts\I2}\\

    \hline

    \sB_{n\ge2}&\SO(2n+1)&\SO(2n)&(1)&2\siv_1{+}\siv_2&\bild{\Alinks\I1\Crechts\I2}\\
    &&&(2)&\siv_1&\bild{\Alinks\I1}\\

    \hline

    \sB'_{n\ge2}&\SO(2n+1)&P_n&(1)&2\siv_1{+}\siv_2&\bild{\Alinks\I1\Crechts\I2}\\
    &&&(2)&2\siv_1{+}\siv_2&\bild{\Alinks\I1\Crechts\I2}\\

    \hline

    \sC_{n\ge3}&\PSp(2n)&\Sp(2)\Sp(2n-2)&(1)&\siv_1&\bild{\Alinks\I1}\\
    &&&(2)&\siv_1{+}\siv_2&\bild{\Alinks\I1\Arechts\I2}\\
    &&&(3)&\gv_1{+}2\siv_2{+}\gv_2{+}\siv_3&
                                             \bild{\Alinks\I4\Arechts\I2\Drechts\def\Punkt{(1.5,0.5)}\IndexRechts{\gv_1}\def\Punkt{(1.5,-0.5)}\IndexRechts{\gv_2}}\\
    &&&(4)&\gv_1{+}2\siv_2{+}2\gv_2&\bild{\Alinks\Ig1\Arechts\I2\Brechts\Ig2}\\

    \hline

    \sC'_{n\ge3}&\PSp(2n)&B_2\Sp(2n-2)&(1)&\siv_1{+}\siv_2{+}\siv_3&\bild{\Alinks\I1\Arechts\I2\Arechts\I3}\\
    &&&(2)&\siv_1{+}2\siv_2{+}\siv_3{+}\siv_4&
                                               \bild{\Alinks\I4\Arechts\I2\Drechts\def\Punkt{(1.5,0.5)}\IndexRechts{\siv_1}\def\Punkt{(1.5,-0.5)}\IndexRechts{\siv_3}}\\
    &&&(3)&\siv_1{+}2\siv_2{+}2\siv_3&\bild{\Alinks\I1\Arechts\I2\Brechts\I3}\\

    \hline

    \sF_4&\sF_4&\Spin(9)&(1)&\gv_2{+}2\siv_2{+}2\gv_1&\bild{\Alinks\Ig2\Arechts\I2\Brechts\Ig1}\\
    &&&(2)&\siv_2{+}2\siv_3{+}2\siv_1&\bild{\Alinks\I2\Arechts\I1\Brechts\I3}\\
    &&&(3)&\siv_3{+}2\siv_4{+}2\siv_2{+}2\siv_1&\bild{\Alinks\I3\Arechts\I4\Arechts\I2\Brechts\I1}\\
    &&&(4)&\siv_2{+}\siv_1{+}\siv_3&\bild{\Alinks\I1\Arechts\I2\Arechts\I3}\\

    \hline

    \sG_2&\sG_2&\SL(3)&(1)&\siv_1+\siv_2&\bild{\Alinks\I1\Arechts\I2}\\

    \hline

    \sG'_2&\sG_2&\G_m\SL(2)U_3&(1)&\siv_1{+}3\siv_2&\bild{\Alinks\I1\def\Punkt{(1,0)}\Glinks\I2}\\

    \hline\hline

    \sD_{n\ge2}&\PSO(2n)&\SO(2n-1)&(1)&\{\gv_1{+}\siv_1,\siv_1{+}\gv_2\}&\bild{\Alinks\Ig1\Arechts\I1\Arechts\Ig2}\\
    &&&(2)&\{\siv_1,\siv_2\}&\bild{\Alinks\I1\def\Punkt{(1,0)}\Alinks\I2}\\

    \hline

    \sB''_3&\SO(7)&\sG_2&(1)&\{\siv_1{+}\siv_3,\siv_2\}&\bild{\Alinks\I1\Crechts\I3\def\Punkt{(2,0)}\Alinks\I2}\\
    &&&(2)&\{\siv_1{+}\siv_2{+}\siv_3,2\siv_2{+}\siv_3\}&\bild{\Alinks\I1\Arechts\I2\Crechts\I3}\\

    \hline

  \end{tabular}
\end{table}

We have $\Sigma(G/H)=\{\tau\}$ and we need to compute
$\Sigma=\Sigma(G/P)$ for all maximal parabolic subgroups $P\subset
H$. This is done in \cref{sec:appendix}. All varieties $G/P$ turn out
to be spherical, even wonderful, a fact for which we don't have a
conceptual argument.

For every spherical root $\sigma$ define its set $\sigma\ass$ of
associated roots as
\[\label{eq:ass}
  \sigma\ass=
  \begin{cases}
    \{\siv\}&\text{if }\sigma\in\Phi,\\
    \{\gv_1,\gv_2\}&\text{otherwise (with $\gv_i$ as in
      \cref{tab:assocroots}).}
  \end{cases}
\]
Put $\Sa:=\cup_{\sigma\in\Sigma}\sigma\ass$. It was shown in
\cite{KnopSchalke} that $\Sa$ is the basis of a maximal rank subgroup
$\Ga_X\subseteq\Gv$. Moreover, the root system of $\Gv_X$ is obtained
from that of $\Ga_X$ by a process called ``folding''. Let $\Phi\ass_X$
be the set of roots of $\Ga_X$.

From \cref{tab:parabolic} one can read off $\Sa$ and $\tau\ass$ as a
linear combination of $\Sa$. The result is recorded in the two right
hand columns of \cref{tab:rankone}. As an example, consider case
$\sC_n(4)$. Here $\sigma_1=\gamma_1+\gamma_2$ with $\gamma_1=\alpha_1$
and $\gamma_2=\alpha_n$. Since $\sigma_2$ is a root we have
$\Sa=\{\gv_1,\siv_2,\gv_2\}$ which is a basis of a root system of type
$\sB_3$. Moreover, one verifies
$\tau\ass=\av_1+2\av_2+\ldots+2\av_{n-1}+2\av_n=\gv_1+2\siv_2+2\gv_2$.

Now it is easy to finish the proof of \cref{thm:main2}.

First, we consider the case $\Sa=\Siv$ (recognizable by the
non-appearance of $\gv_i$'s). Here one checks that
$\tau\ass\subseteq\Phi\ass_X$ which implies
$\Gv_Y\subseteq\Ga_X=\Gv_X$.

Next assume that $\Sa\ne\Siv$ but $\tau\ass=\{\tv\}$.  Here, one
checks that $\tv$ is actually the highest root of $\Phi\ass_X$. Since
all simple roots of $\Ga_X$ restrict to simple roots of $\Gv_X$, there
is no other root of $\Ga_X$ which has the same restriction as
$\tau\ass$. This implies $\fgv_{X,\tau}=\fg\ass_{X,\tau}=\fgv_\tau$
and therefore $\Gv_Y\subseteq\Gv_X$.

The only case remaining is that of $\sD_n(1)$ depending on a parameter
$\nu\in\{1,\ldots,n-2\}$. It suffices to prove
\[\label{eq:Dn}
  \fgv_{\tv}=[\fgv_{\siv_1},\fgv_{\siv_2}]
\]
since then $\fgv_{\tv}\subseteq\fgv_X$ and therefore
$\Gv_Y\subseteq\Gv_X$.

Using the standard basis $\e_i$ for the weight lattice of $\sD_n$ and
the normalization \eqref{eq:defsv} we have
\[
  \fgv_{\tv}=[\fgv_{\e_1-\e_{n-1}},E]\text{ with
  }E:=e_{\e_{n-1}-\e_n}-e_{\e_{n-1}+\e_n}.
\]
If $\nu=n-2$ then $\fgv_{\siv_2}=\CC E$ and
$\siv_1=\e_1-\e_{\nu+1}=\e_1-\e_{n-1}$ which proves
\eqref{eq:Dn}. Otherwise, we have
\[
  \fgv_{\siv_2}=[\fgv_{\e_{\nu+1}-\e_{n-1}},E]
\]
and therefore
\[
  [\fgv_{\siv_1},\fgv_{\siv_2}]=[\fgv_{\e_1-\e_{\nu+1}},[\fgv_{\e_{\nu+1}-\e_{n-1}},E]]=
  [\fgv_{\e_1-\e_{n-1}},E]=\fgv_{\tv}.
\]
\cref{thm:main2} is proved.\qed

\section{Appendix: Maximal parabolics in
  rank-$1$-subgroups}\label{sec:appendix}

In the following, we use the classification of spherical varieties
using Luna diagrams due to Luna \cite{Luna}, Losev \cite{Losev}, and
Bravi-Pezzini \cite{BraviPezzini}. A very good introduction to this
topic can be found in \cite{BraviLuna}.

\cref{tab:parabolic} below lists the Luna diagrams of all cuspidal
rank-$1$-varieties $Y=G/H$ ($G$ adjoint, $H$ connected). For each such
diagram we list a number of further Luna diagrams. We claim that these
classify all varieties $X=G/P$ with $P\subset H$ maximal
parabolic.

Along with the diagram of $X$ we are also giving the complete generalized
Cartan matrix so that the ``decorations'' of the diagrams by arrow
heads ``$<$'' or ``$>$'' are not needed. The rows of the Cartan matrix are
labelled by the spherical roots $\sigma_i\in\Sigma:=\Sigma(X)$. The
columns correspond to the colors, i.e., to the $B$-invariant
irreducible divisors $D_j$ of $X$. They also correspond to the circles
(filled or empty) in the Luna diagram. The index $j$ of $D_j$ means
that $D_j$ is attached to the simple root $\a_j$. The entries of
the Cartan matrix are the numbers $v_{D_j}(f_{\sigma_i})\in\ZZ$ where
$f_{\sigma_i}\in k(X)$ is a $B$-semiinvariant for the character
$\sigma_i$.

The claim can be verified in several easy steps:

1. First, one checks that all diagrams and Cartan matrices satisfy
Luna's axioms. Thus, each belongs to a unique spherical (even
wonderful) variety $X=G/P$.

2. Let $\cD_0$ be the set of colors which are printed in boldface. The
corresponding columns sum up to $0$ which shows that $\cD_0$ is
distinguished in the sense of \cite{BraviLuna}*{2.3}. Therefore,
$\cD_0$ defines a $G$-morphism $X\to Y'=G/H'$ with
$P\subseteq H'\subseteq G$ and $H'/P$ is connected.

3. Next one uses \cite{BraviLuna}*{2.3} to verify that the spherical
systems of $Y$ and $Y'$ coincide which then implies that $H'$ is
conjugate to $H$. To do this one shows that $\tau$ (whose coordinates
in terms of the $\sigma_i$ are provided in the leftmost column)
generates the orthogonal complement of the boldface columns. One also
has to observe that the colors not in $\cD_0$ correspond to the colors
of $Y$.

4. That $P$ is parabolic in $H$ is equivalent to $G/P\to G/H$ being
proper which is equivalent to no $G$-invariant valuation of $G/P$
restricting to the trivial valuation of $G/H$. This in turn translates
into $\tau$ being a linear combination of the $\sigma_i$ with strictly
positive coefficients. This is clear from looking at the leftmost
column.

5. The submatrix given by the boldface entries is always a square
matrix of defect $1$. Hence the columns of every proper subset of
$\cD_0$ are linear independent which shows that such a subset in not
distinguished. This means that $P$ is maximal proper subgroup of $H$.

6. The preceding steps show that $P$ is a maximal parabolic in $H$. To
see that all of them are listed one checks that the number of items in
the table equals the number of $G$-conjugacy classes of maximal
parabolics of $H$. To do this one can consult \cref{tab:rankone} for
$H$. In most cases this number equals the number of maximal parabolics
of $H$. Only in the cases $\sB_n$ and $\sG_2$ there is an element of
$N_G(H)$ acting as an outer automorphism on $H$. This results in two
non-conjugate maximal parabolics of $H$ being conjugate in $G$
resulting in one item less.

\def\scalar{0.6}

\begin{longtable}{HH|H}

\caption*{Table \ref{tab:parabolic}.}\label{tab:parabolic}

\\\hline

\sA_n&
\begin{array}[t]{l}
\bild{\Alinks\WurzelAn}\\
\tau=\a_1\+\a_n\\
\end{array}\\

(1)&
\begin{array}[t]{ll}
\bild{\Alinks\WurzelAn\IndexUnten{\a_\nu\rlap{\ (\nu=1,\ldots,n-1)}}\Arechts\WurzelAn}\\
\sigma_1=\a_1\+\a_\nu\\
\sigma_2=\a_{\nu+1}\+\a_n\\
\end{array}

&\begin{array}[t]{l|l|r|R|R|r|}
&&D_1&D_\nu&D_{\nu+1}&D_n\\
\hline
1&\sigma_1&1&1&-1&0\\
1&\sigma_2&0&-1&1&1\\
\end{array}\\

\hline

\sB_n&
\begin{array}[t]{l}
\bild{\FarbeD\Arechts\ddd\Brechts}\\
\tau=\a_1\+\a_n\\
\end{array}\\

(1)&
\begin{array}[t]{l}
\bild{\Alinks\WurzelAn\IndexUnten{\a_\nu\rlap{\ (\nu=1,\ldots,n-2)}}\Arechts\FarbeD\ddd\Brechts}\\
\sigma_1=\a_1\+\a_\nu\\
\sigma_2=\a_{\nu+1}\+\a_n\\
\end{array}&

\begin{array}[t]{l|l|r|R|R|}
&&D_1&D_\nu&D_{\nu+1}\\
\hline
1&\sigma_1&1&1&-1\\
1&\sigma_2&0&-1&1\\
\end{array}\\

\noalign{\medskip}

(2)&
\begin{array}[t]{l}
\bild{\FarbeD\Arechts\ddd\Brechts\FarbeM}\\
\sigma_1=\a_1\+\a_n\\
\end{array}&

\begin{array}[t]{l|l|r|R|}
&&D_1&D_n\\
\hline
1&\sigma_1&1&0\\
\end{array}\\

\hline

\sB'_n&
\begin{array}[t]{l}
\bild{\FarbeD\Arechts\ddd\Brechts\FarbeM}\\
\tau=\a_1\+\a_n\\
\end{array}\\

(1)&
\begin{array}[t]{l}
\bild{\Alinks\WurzelAn\IndexUnten{\a_\nu\rlap{\ (\nu=1,\ldots,n-2)}}\Arechts\FarbeD\ddd\Brechts\FarbeM}\\
\sigma_1=\a_1\+\a_\nu\\
\sigma_2=\a_{\nu+1}\+\a_n\\
\end{array}&

\begin{array}[t]{l|l|r|R|R|r|}
&&D_1&D_\nu&D_{\nu+1}&D_n\\
\hline
1&\sigma_1&1&1&-1&0\\
1&\sigma_2&0&-1&1&0\\
\end{array}\\

(2)&
\begin{array}[t]{l}
\bild{\Alinks\WurzelAn\Brechts\FarbeU\FarbeO}\\
\sigma_1=\a_1\+\a_{n-1}\\
\sigma_2=\a_n\\
\end{array}&

\begin{array}[t]{l|l|r|R|R|r|}
&&D_1&D_{n-1}&D_n^+&D_n^-\\
\hline
1&\sigma_1&1&1&-1&-1\\
1&\sigma_2&0&-1&1&1\\
\end{array}\\

\hline

\sC_n&
\begin{array}[t]{l}
\bild{\Alinks\Arechts\FarbeD\Arechts\ddd\Crechts}\\
\tau=\a_1+2\a_2\+2\a_{n-1}+\a_n\\
\end{array}\\

(1)&
\begin{array}[t]{l}
\bild{\Alinks\FarbeM\Arechts\FarbeD\Arechts\ddd\Crechts}\\
\sigma_1=\a_1+2\a_2\+2\a_{n-1}+\a_n\\
\end{array}&

\begin{array}[t]{l|l|R|r|}
&&D_1&D_2\\
\hline
1&\sigma_1&0&1\\
\end{array}\\

(2)&
\begin{array}[t]{l}
\bild{\Alinks\FarbeM\zz\Arechts\FarbeM\Arechts\FarbeD\Arechts\ddd\Crechts}\\
\sigma_1=\a_1+\a_2\\
\sigma_2=\a_2+2\a_3\+2\a_{n-1}+\a_n\\
\end{array}&

\begin{array}[t]{l|l|R|r|R|}
&&D_1&D_2&D_3\\
\hline
1&\sigma_1&1&1&-1\\
1&\sigma_2&-1&0&1\\
\end{array}\\

(3)&
\begin{array}[t]{l}
\bild{\Alinks\FarbeM\Arechts\WurzelAn\IndexUnten{\a_\nu\rlap{\ \ \ \ \ (\nu=2,\ldots,n-2)}}\Arechts\FarbeM\Arechts\FarbeD\Arechts\ddd\Crechts\draw (0,-0.2)--++(0,-0.2)--++(6,0)--++(0,0.2);}\\
\sigma_1=\a_1+\a_{\nu+1}\\
\sigma_2=\a_2\+\a_\nu\\
\sigma_3=\a_{\nu+1}+2\a_{\nu+2}\+2\a_{n-1}+\a_n\\
\end{array}&

\begin{array}[t]{l|l|R|r|R|R|}
&&D_1&D_2&D_\nu&D_{\nu+2}\\
\hline
1&\sigma_1&2&-1&-1&-1\\
2&\sigma_2&-1&1&1&0\\
1&\sigma_3&0&0&-1&1\\
\end{array}\\

(4)&
\begin{array}[t]{l}
\bild{\Alinks\FarbeM\Arechts\WurzelAn\Crechts\FarbeM\draw (0,-0.2)--++(0,-0.2)--++(6,0)--++(0,0.2);}\\
\sigma_1=\a_1+\a_n\\
\sigma_2=\a_2\+\a_{n-1}\\
\end{array}&

\begin{array}[t]{l|l|R|r|R|}
&&D_1&D_2&D_{n-1}\\
\hline
1&\sigma_1&2&-1&-2\\
2&\sigma_2&-1&1&1\\
\end{array}\\

\hline

\sC'_n&
\begin{array}[t]{l}
\bild{\Alinks\FarbeM\Arechts\FarbeD\Arechts\ddd\Crechts}\\
\tau=\a_1+2\a_2\+2\a_{n-1}+\a_n\\
\end{array}\\

(1)&
\begin{array}[t]{l}
\bild{\Alinks\FarbeU\FarbeO\Arechts\FarbeU\FarbeO\Arechts\FarbeD\Arechts\ddd\Crechts}\\
\sigma_1=\a_1\\
\sigma_2=\a_2\\
\sigma_3=\a_2+2\a_3\+2\a_{n-1}+\a_n\\
\end{array}&

\begin{array}[t]{l|l|R|r|r|R|R|}
&&D_1^+&D_1^-&D_2^+&D_2^-&D_3\\
\hline
1&\sigma_1&1&1&0&-1&0\\
1&\sigma_2&0&-1&1&1&-1\\
1&\sigma_3&-1&0&0&0&1\\
\end{array}\\

(2)&\begin{array}[t]{l}
\bild{\Alinks\FarbeU\FarbeO\Arechts\WurzelAn\IndexUnten{\a_\nu\rlap{\ \ \ \ \ (\nu=2,\ldots,n-2)}}\Arechts\FarbeU\FarbeO\Arechts\FarbeD\Arechts\ddd\Crechts\draw (0,-0.57)--++(0,-0.2)--++(6,0)--++(0,0.2);}\\
\sigma_1=\a_1\\
\sigma_2=\a_2\+\a_\nu\\
\sigma_3=\a_{\nu+1}\\
\sigma_4=\a_{\nu+1}+2\a_{\nu+2}\+2\a_{n-1}+\a_n\\
\end{array}&

\begin{array}[t]{l|l|r|R|r|R|R|R|}
&&D_1^+&D_1^-&D_2&D_\nu&D_{\nu+1}^+&D_{\nu+2}\\
\hline
1&\sigma_1&1&1&-1&0&-1&0\\
2&\sigma_2&0&-1&1&1&0&0\\
1&\sigma_3&-1&1&0&-1&1&-1\\
1&\sigma_4&0&0&0&-1&0&1\\
\end{array}\\

(3)&
\begin{array}[t]{l}
\bild{\Alinks\FarbeU\FarbeO\Arechts\WurzelAn\Crechts\FarbeU\FarbeO\draw (0,-0.57)--++(0,-0.2)--++(6,0)--++(0,0.2);}\\
\sigma_1=\a_1\\
\sigma_2=\a_2\+\a_{n-1}\\
\sigma_3=\a_n\\
\end{array}&

\begin{array}[t]{l|l|r|R|r|R|R|}
&&D_1^+&D_1^-&D_2&D_{n-1}&D_n^+\\
\hline
1&\sigma_1&1&1&-1&0&-1\\
2&\sigma_2&0&-1&1&1&0\\
1&\sigma_3&-1&1&0&-2&1\\
\end{array}\\

\hline

\sF_4&
\begin{array}[t]{l}
\bild{\Alinks\Arechts\Brechts\Arechts\FarbeD}\\
\tau=\a_1+2\a_2+3\a_3+2\a_4\\
\end{array}\\

(1)&
\begin{array}[t]{l}
\bild{\Alinks\Arechts\Brechts\FarbeD\Arechts\FarbeU\FarbeO}\\
\sigma_1=\a_1+2\a_2+3\a_3\\
\sigma_2=\a_4\\
\end{array}&

\begin{array}[t]{l|l|R|R|r|}
&&D_3&D_4^+&D_4^-\\
\hline
1&\sigma_1&2&-2&-1\\
2&\sigma_2&-1&1&1\\
\end{array}\\

(2)&
\begin{array}[t]{l}
\bild{\Alinks\FarbeM\zz\Arechts\FarbeD\Brechts\FarbeM\zz\Arechts\FarbeM}\\
\sigma_1=\a_1+\a_2\\
\sigma_2=\a_2+\a_3\\
\sigma_3=\a_3+\a_4\\
\end{array}&

\begin{array}[t]{l|l|R|R|R|r|}
&&D_1&D_2&D_3&D_4\\
\hline
1&\sigma_1&1&1&-2&0\\
1&\sigma_2&-1&1&0&-1\\
2&\sigma_3&0&-1&1&1\\
\end{array}\\

(3)&
\begin{array}[t]{l}
\bild{\Alinks\FarbeU\FarbeO\Arechts\FarbeD\Brechts\FarbeU\FarbeO\Arechts\FarbeU\FarbeO\draw (0,0.57)--++(0,0.2)--++(2,0)--++(0,-0.2);\draw (0,-0.57)--++(0,-0.2)--++(3,0)--++(0,0.2);}\\
\sigma_1=\a_1\\
\sigma_2=\a_2+\a_3\\
\sigma_3=\a_3\\
\sigma_4=\a_4\\
\end{array}&

\begin{array}[t]{l|l|R|R|R|R|r|}
&&D_1^+&D_1^-&D_2&D_3^-&D_4^+\\
\hline
1&\sigma_1&1&1&-1&-1&-1\\
2&\sigma_2&0&-1&1&0&0\\
1&\sigma_3&1&-1&-1&1&0\\
2&\sigma_4&-1&1&0&0&1\\
\end{array}\\

(4)&
\begin{array}[t]{l}
\bild{\FarbeD\Arechts\Brechts\FarbeD\Arechts\FarbeU\FarbeO}\\
\sigma_1=\a_1+\a_2+\a_3\\
\sigma_2=\a_2+2\a_3+\a_4\\
\sigma_3=\a_4\\
\end{array}&

\begin{array}[t]{l|l|R|R|R|r|}
&&D_1&D_3&D_4^+&D_4^-\\
\hline
1&\sigma_1&1&0&-1&0\\
1&\sigma_2&-1&1&0&0\\
1&\sigma_3&0&-1&1&1\\
\end{array}\\

\hline

\sG_2&
\begin{array}[t]{l}
\bild{\FarbeD\Grechts}\\
\tau=2\a_1+\a_2\\
\end{array}\\

(1)&
\begin{array}[t]{l}
\bild{\Alinks\FarbeU\FarbeO\Grechts\FarbeD}\\
\sigma_1=\a_1\\
\sigma_2=\a_1+\a_2\\
\end{array}&

\begin{array}[t]{l|l|r|R|R|}
&&D_1^+&D_1^-&D_2\\
\hline
1&\sigma_1&1&1&-1\\
1&\sigma_2&0&-1&1\\
\end{array}\\

\hline

\sG'_2&
\begin{array}[t]{l}
\bild{\Alinks\FarbeM\Grechts\FarbeD}\\
\tau=\a_1+\a_2\\
\end{array}\\

(1)&
\begin{array}[t]{l}
\bild{\Alinks\FarbeU\FarbeO\Grechts\FarbeU\FarbeO}\\
\sigma_1=\a_1\\
\sigma_2=\a_2\\
\end{array}&

\begin{array}[t]{l|l|r|R|R|r|}
&&D_1^+&D_1^-&D_2^+&D_2^-\\
\hline
1&\sigma_1&1&1&-1&0\\
1&\sigma_2&-2&-1&1&1\\
\end{array}\\

\hline

\sD_n&
\begin{array}[t]{l}
\bild{\FarbeD\Arechts\ddd\Drechts}\\
\tau=2\a_1\+2\a_{n-2}+\a_{n-1}+\a_n\\
\end{array}\\

(1)&
\begin{array}[t]{l}
\bild{\Alinks\WurzelAn\IndexUnten{\a_\nu\rlap{\ (\nu=1,\ldots,n-2)}}\Arechts\FarbeD\Arechts\ddd\Drechts}\\
\sigma_1=\a_1\+\a_\nu\\
\sigma_2=2\a_{\nu+1}\+2\a_{n-2}+\a_{n-1}+\a_n\\
\end{array}&

\begin{array}[t]{l|l|r|R|R|}
&&D_1&D_\nu&D_{\nu+1}\\
\hline
2&\sigma_1&1&1&-1\\
1&\sigma_2&0&-2&2\\
\end{array}\\

(2)&
\begin{array}[t]{l}
\bild{\Alinks\FarbeM\Arechts\ddd\Arechts\Drechts
\FarbeMex(4.5,0.5)\FarbeMex(4.5,-0.5)

\draw[thick](0,0)--++(0.1,0.1);
\draw[thick](0,0)--++(0.1,-0.1);

\draw[thick,rotate around ={45:(4.5,0.5)}]
(4.5,0.5)--++(-0.1,0.1)
foreach \x in {1,...,5} {--++(-0.05,0.05)--++(-0.05,-0.05)}
--++(-0.05,0.05);

\draw[thick]
($(0,0)+(0.1,0.1)$)
foreach \x in {1,...,38} {--++(0.05,0.05)--++(0.05,-0.05)}
--++(0.06,0.06);

\draw[thick]
($(0,0)+(0.1,-0.1)$)
foreach \x in {1,...,38} {--++(0.05,-0.05)--++(0.05,0.05)}
--++(0.06,-0.06);

\draw[thick,rotate around ={135:(4.5,-0.5)}]
(4.5,-0.5)--++(0.1,0.1)
foreach \x in {1,...,5} {--++(0.05,0.05)--++(0.05,-0.05)}
--++(0.05,0.05);}\\
\sigma_1=\a_1\+\a_{n-2}+\a_{n-1}\\
\sigma_2=\a_1\+\a_{n-2}+\a_n\\
\end{array}&

\begin{array}[t]{l|l|r|R|R|}
&&D_1&D_{n-1}&D_n\\
\hline
1&\sigma_1&1&1&-1\\
1&\sigma_2&1&-1&1\\
\end{array}\\

\hline

\sB''_3&
\begin{array}[t]{l}
\bild{\Alinks\Arechts\Brechts\FarbeD}\\
\tau=\a_1+2\a_2+3\a_3\\
\end{array}\\

(1)&
\begin{array}[t]{l}
\bild{\Alinks\FarbeM\zz\Arechts\FarbeD\Brechts\FarbeU\FarbeO}\\
\sigma_1=\a_1+\a_2\\
\sigma_2=\a_2+a_3\\
\sigma_3=\a_3\\
\end{array}&

\begin{array}[t]{l|l|R|R|R|r|}
&&D_1&D_2&D_3^+&D_3^-\\
\hline
1&\sigma_1&1&1&-2&0\\
1&\sigma_2&-1&1&0&0\\
2&\sigma_3&0&-1&1&1\\
\end{array}\\

(2)&
\begin{array}[t]{l}
\bild{\Alinks\FarbeU\FarbeO\draw (0,0.57)--++(0,0.2)--++(1,0)--++(0,-0.2);\draw (0,-0.57)--++(0,-0.2)--++(2,0)--++(0,0.2);\Arechts\FarbeU\FarbeO\Brechts\FarbeU\FarbeO}\\
\sigma_1=\a_1\\
\sigma_2=\a_2\\
\sigma_3=\a_3\\
\end{array}&

\begin{array}[t]{l|l|R|R|R|r|}
&&D_1^+&D_1^-&D_2^-&D_3^+\\
\hline
1&\sigma_1&1&1&-2&-1\\
2&\sigma_2&1&-2&1&0\\
3&\sigma_3&-1&1&0&1\\
\end{array}\\

\hline

\end{longtable}

\begin{bibdiv}
  \begin{biblist}

\bib{Akhiezer}{article}{
  author={Ahiezer, Dmitry},
  title={Equivariant completions of homogeneous algebraic varieties by homogeneous divisors},
  journal={Ann. Global Anal. Geom.},
  volume={1},
  date={1983},
  pages={49--78},
}

\bib{BraviLuna}{article}{
  author={Bravi, P.},
  author={Luna, D.},
  title={An introduction to wonderful varieties with many examples of type $\rm F_4$},
  journal={J. Algebra},
  volume={329},
  date={2011},
  pages={4--51},
  arxiv={0812.2340},
}

\bib{BraviPezzini}{article}{
  author={Bravi, Paolo},
  author={Pezzini, Guido},
  title={Primitive wonderful varieties},
  journal={Math. Z.},
  volume={282},
  date={2016},
  pages={1067--1096},
  arxiv={1106.3187},
}

\bib{BrionRank1}{article}{
  author={Brion, Michel},
  title={On spherical varieties of rank one (after D. Ahiezer, A. Huckleberry, D. Snow)},
  conference={ title={Group actions and invariant theory}, address={Montreal, PQ}, date={1988}, },
  book={ series={CMS Conf. Proc.}, volume={10}, publisher={Amer. Math. Soc., Providence, RI}, },
  date={1989},
  pages={31--41},
}

\bib{Brion}{article}{
  author={Brion, Michel},
  title={Vers une généralisation des espaces symétriques},
  journal={J. Algebra},
  volume={134},
  date={1990},
  pages={115--143},
}

\bib{GaitsgoryNadler}{article}{
  author={Gaitsgory, Dennis},
  author={Nadler, David},
  title={Spherical varieties and Langlands duality},
  journal={Mosc. Math. J.},
  volume={10},
  date={2010},
  pages={65--137, 271},
  arxiv={math/0611323},
}

\bib{KnopWuM}{article}{
  author={Knop, Friedrich},
  title={Weylgruppe und Momentabbildung},
  language={German, with English summary},
  journal={Invent. Math.},
  volume={99},
  date={1990},
  pages={1--23},
}

\bib{KnopLV}{article}{
  author={Knop, Friedrich},
  title={The Luna-Vust theory of spherical embeddings},
  conference={ title={Proceedings of the Hyderabad Conference on Algebraic Groups}, address={Hyderabad}, date={1989}, },
  book={ publisher={Manoj Prakashan, Madras}, },
  date={1991},
  pages={225--249},
}

\bib{KnopIB}{article}{
  author={Knop, Friedrich},
  title={Über Bewertungen, welche unter einer reduktiven Gruppe invariant sind},
  journal={Math. Ann.},
  volume={295},
  date={1993},
  pages={333--363},
}

\bib{KnopAB}{article}{
  author={Knop, Friedrich},
  title={The asymptotic behavior of invariant collective motion},
  journal={Invent. Math.},
  volume={116},
  date={1994},
  pages={309--328},
}

\bib{KnopAuto}{article}{
  author={Knop, Friedrich},
  title={Automorphisms, root systems, and compactifications of homogeneous varieties},
  journal={J. Amer. Math. Soc.},
  volume={9},
  date={1996},
  pages={153--174},
}

\bib{KK}{article}{
  author={Knop, Friedrich},
  author={Krötz, Bernhard},
  title={Reductive group actions},
  journal={Preprint},
  date={2016},
  pages={62 pp.},
  arxiv={1604.01005},
}

\bib{KnopSchalke}{article}{
  author={Knop, Friedrich},
  author={Schalke, Barbara},
  title={The dual group of a spherical variety},
  journal={Preprint},
  date={2017},
  pages={30 pp.},
  arxiv={1702.08264},
}

\bib{Losev}{article}{
  author={Losev, Ivan},
  title={Uniqueness property for spherical homogeneous spaces},
  journal={Duke Math. J.},
  volume={147},
  date={2009},
  pages={315--343},
  arxiv={0904.2937},
}

\bib{Luna}{article}{
  author={Luna, Domingo},
  title={Variétés sphériques de type $A$},
  journal={Publ. Math. Inst. Hautes Études Sci.},
  number={94},
  date={2001},
  pages={161--226},
}

\bib{LunaVust}{article}{
  author={Luna, Domingo},
  author={Vust, Thierry},
  title={Plongements d'espaces homogènes},
  journal={Comment. Math. Helv.},
  volume={58},
  date={1983},
  pages={186--245},
}

\bib{PanyushevRankOne}{article}{
  author={Panyushev, Dmitri},
  title={On homogeneous spaces of rank one},
  journal={Indag. Math. (N.S.)},
  volume={6},
  date={1995},
  pages={315--323},
}

\bib{Rosenlicht}{article}{
  author={Rosenlicht, Maxwell},
  title={Some basic theorems on algebraic groups},
  journal={Amer. J. Math.},
  volume={78},
  date={1956},
  pages={401--443},
}

\bib{SV}{article}{
  author={Sakellaridis, Yiannis},
  author={Venkatesh, Akshay},
  title={Periods and harmonic analysis on spherical varieties},
  date={2017},
  pages={296p.},
  arxiv={1203.0039v4},
}

  \end{biblist}
\end{bibdiv}

\end{document}